\newtheorem{theorem}{Theorem}[section]
\newtheorem{corollary}{Corollary}
\newtheorem{lemma}[theorem]{Lemma}
\newtheorem{proposition}{Proposition}
\theoremstyle{definition}
\newtheorem{definition}[theorem]{Definition}
\newtheorem{remark}{Remark}
\title[Periodic Waves for the
KG-NLS System]
      {Stability Properties of Periodic Standing Waves for the
Klein-Gordon-Schr\"odinger System}
\author[F\'abio Natali and Ademir Pastor]{}
\subjclass{Primary: 35B10, 35B35; Secondary: 35Q99.}
 \keywords{periodic standing
 waves, orbital stability, Klein-Gordon-Schr\"odinger system.}
 \email{fmanatali@uem.br}
 \email{apastor@impa.br}
\thanks{The second author is supported by CNPq/Brazil under grant no. 152234/2007-1.}
\begin{document}
\maketitle

\centerline{\scshape F\'abio Natali }
\medskip
{\footnotesize
 \centerline{Universidade Estadual de Maring\'a - UEM}
   \centerline{Avenida Colombo, 5790,}
   \centerline{CEP 87020-900,
 Maring\'a, PR, Brazil.}
} 

\medskip

\centerline{\scshape Ademir Pastor}
\medskip
{\footnotesize
 \centerline{ Instituto de Matem\'atica Pura e Aplicada -
IMPA}
   \centerline{Estrada Dona Castorina, 110}
   \centerline{CEP 22460-320, Rio de Janeiro, RJ, Brazil.}
}

\bigskip



\begin{abstract}
We study the existence and orbital stability/instability of periodic
standing wave solutions for the Klein-Gordon-Schr\"odinger system
with Yukawa and cubic interactions. We prove the existence of
periodic waves depending on the Jacobian elliptic functions. For one
hand, the approach used to obtain the stability results is the
classical Grillakis, Shatah and Strauss theory in the periodic
context. On the other hand, to show the instability results we
employ a general criterium introduced by Grillakis, which get
orbital instability from linear instability.
\end{abstract}

\section{Introduction.}

\hspace{0.05cm} In this paper we shall investigate the orbital
stability of periodic standing wave solutions associated to the
Klein-Gordon-Schr\"odinger system (KG-NLS henceforth),

\begin{equation}    \label{KGSCH}
\left\{\begin{array}{lll}
iu_t+\displaystyle\frac{1}{2}\Delta u=-uv\frac{\partial f}{\partial|u|^2}(|u|^2,v)\\\\
v_{tt}-\Delta v+m^2v=f(|u|^2,v)+v\displaystyle\frac{\partial
f}{\partial v}(|u|^2,v),
\end{array}\right.
\end{equation}
when $f(s,t)=s$ and $f(s,t)=st$, here $s,t\in\mathbb{R}$.

If $f(s,t)=s$ equations in $(\ref{KGSCH})$ are the so-called
Klein-Gordon-Schr\"odinger system with Yukawa interaction and it
describes a system of conserved scalar nucleons interacting with
neutral scalar meson. Here
$u:\mathbb{R}^d\times\mathbb{R}\rightarrow\mathbb{C}$ represents a
complex scalar nucleon field and
$v:\mathbb{R}^d\times\mathbb{R}\rightarrow\mathbb{R}$ a real scalar
meson field. The real constant $m^2$ determines the mass of a meson.
The full system \eqref{KGSCH} was motivated  by Hayashi's paper
\cite{hayashi}.

We restrict ourselves to the case $d=1$. The periodic standing waves
we are interested in are of the form
\begin{equation}\label{standingwaves}
u(x,t)=e^{ict}\psi_c(x), \qquad v(x,t)=\phi_c(x),
\end{equation}
where $c \in \mathbb{R}$ and
$\psi_c,\phi_c:\mathbb{R}\rightarrow\mathbb{R}$ are smooth periodic
functions with a fixed period $L>0$.

Studies related to the stability of stationary waves and
well-posedness results for equations $(\ref{KGSCH})$, have been
having a considerable development in recent years. For instance,
when $d=3$, Ohta in \cite{ohta1}, obtained a result of stability for
stationary states for equations $(\ref{KGSCH})$ ($f(s,t)=s$) by
using the variational approach introduced by Cazenave and Lions in
\cite{cazenave1}. Later, Kikuchi and Ohta in \cite{kikuchi-ohta},
established a result of orbital instability related to the same
equation when the wave-speed $c>0$ is sufficiently small. On the
other hand, Baillon and Chadam in \cite{baillon} deduced existence
of global solutions by using the $L^p-L^q$ estimates for the
elementary solutions of the Schr\"odinger equation. Fukuda and
Tsutsumi in \cite{fukuda-tsutsumi}, discussed the initial boundary
value problem of KG-NLS $(\ref{KGSCH})$ and obtained the global
existence of strong solutions in the three-dimensional case, and
later the results were improved in \cite{hayashi1}. Others
contributors can be mentioned as \cite{bachelot}, \cite{colliander},
\cite{efinger}, \cite{hayashi}, \cite{rabsztyn}, \cite{tzirakis}.

 In the one-dimensional case, Tang and Ding in
\cite{tang} (see also \cite{tang1}) studied a result of modulational
instability related to the general Klein-–Gordon–-Schr\"odinger
given by
\begin{equation}\label{gKGSS}
\left\{\begin{array}{llll} iu_t+\alpha u_{xx}+\rho uv+\gamma_1|u|^2u=0\\\\
v_{tt}-c_0^2v_{xx}+m^2v+\gamma_2v^3-\beta|u|^2=0,\end{array}\right.
\end{equation}
where $u$, $v$ and $m^2$ are given as above, $\gamma_i$, $i=1,2$ are
cubic nonlinear auto-interactions, $\beta$ and $\rho$ are quadratic
coupling constants and, $\alpha$ and $c_0$ are constants. Further,
it was found that there are a number of possibilities for the
modulational instability regions due to the generalized dispersion
relation, which relates the frequency and wave-number of the
modulating perturbations. When $\gamma_1=\gamma_2=0$,
$\alpha=\frac{1}{2}$ and $\beta=\rho=c_0^2=1$, equation
$(\ref{gKGSS})$ becomes $(\ref{KGSCH})$ with $f(s,t)=s$.

 In general, the studies about the stability/instability to
Klein-Gordon (KG henceforth) and nonlinear Schr\"odinger (NLS
henceforth) equations have attracted a large set of researchers. It
is known that both KG and NLS equations, specially with cubic
interactions, have wide applications in many physical fields such as
nonlinear optics, nonlinear plasmas, condensed matter and so on.
Besides, a similar system given by equations in $(\ref{gKGSS})$ may
describe the dynamics of coupled electrostatic upper-hybrid and
ion-cyclotron waves in a uniform magnetoplasma (see \cite{tang} and
\cite{yu}).

 In a stability/instability approach if one considers the
general KG equation
\begin{equation} \label{genKG}
u_{tt}-\Delta u+f(|u|^2)u=0,  \qquad t\in \mathbb{R},\,\, x \in
\mathbb{R}^d,
\end{equation}
Grillakis \cite{grillakis} (see also \cite{grillakis1},
\cite{grillakis3}) determined sufficient conditions for the orbital
instability of the standing waves $e^{ict}\varphi(x)$ in the space
of radial functions, where $\varphi(x)=\varphi(|x|)$ has a finite
number of nodes (with some restrictions on the nonlinearity). Others
contributions in this qualitative approach can be mentioned, for
example, \cite{shatah1}, \cite{shatah2} and \cite{shatahstrauss}. In
the periodic context, a recent work due to Natali and Pastor in
\cite{natali-pastor} determined stable/unstable families of periodic
standing wave solutions for equation $(\ref{genKG})$ when $f(v)=1-v$
and $f(v)=v$ (with $d=1$) making use of the abstract theory
established in \cite{grillakis1} and \cite{grillakis2}.

 Next, when the NLS
\begin{equation}\label{schrodinger}
iu_t+\Delta u+|u|^{p-1}u=0,\ \ \ \ t\in\mathbb{R},\
x\in\mathbb{R}^d,
\end{equation}
is considered, a large amount of papers concerning the
stability/instability of standing waves can be found in the current
literature. In particular, Cazenave and Lions in \cite{cazenave1},
determined the existence of stable ones, of the form
$u(x,t)=e^{i\omega t}\varphi(x)$, for equation $(\ref{schrodinger})$
with $d\geq1$ and $1<p<1+4/d$. In \cite{grillakis1} and
\cite{grillakis2} is possible to find a set of sufficient conditions
that determines stability/instability of standing waves for that
equation. In this case, the theory of stablity/instability related
to the periodic case (for $d=1$) have been a terrific development,
for instance \cite{angulo1}, \cite{angulo-linares}, \cite{AN2},
\cite{Pastor}, \cite{Gallay1} and \cite{pastor1} .

 The methods used in the present paper in order to show our
 stability/instability results will be the ones developed by Grillakis \textit{et
al.} \cite{grillakis1}, \cite{grillakis2} and Grillakis
\cite{grillakis}, \cite{grillakis3}. The main reason for this is
because system $(\ref{KGSCH})$, when $f(s,t)=s$ or $f(s,t)=st$, can
be seen as an abstract Hamiltonian system
\begin{equation}\label{hamiltoniano}
\displaystyle\frac{dU(t)}{dt}=J\mathcal{E}'(U(t)),
\end{equation}
where $U=(u_1,v_1,u_2,v_2)=(\mbox{Re}u,v,\mbox{Im}u,v_t)$,
$\mathcal{E}$ represents the energy functional and $J$ is the
skew-symmetric matrix defined by
\begin{equation}    \label{matrixJ}
\displaystyle J=\left(
\begin{array}{cccc}
0 & 0 & 1/2 & 0\\\\
0 & 0 & 0 & 1\\\\
-1/2 & 0 & 0 & 0\\\\
0 & -1 & 0 & 0
\end{array}\right).
\end{equation}

Firstly, we consider the case $f(s,t)=s$. In order  to get explicit
solutions we suppose $\psi_c=\sqrt2 \phi_c$ in
\eqref{standingwaves}. Thus, substituting \eqref{standingwaves} into
\eqref{KGSCH} (with $\psi_c=\sqrt2 \phi_c$), it follows that
$\phi_c$ must satisfy the ordinary differential equation
\begin{equation}\label{standingKGS}
-\phi_c''+2c\phi_c-2\phi_c^2=0,
\end{equation}
where $2c=m^2$. Multiplying equation $(\ref{standingKGS})$ by
$\phi'$ and integrating once, we obtain
\begin{equation}\label{quadra}
[\varphi_{\omega}']^2=\displaystyle\frac{1}{3}[-\varphi_{\omega}^3+
3\omega\varphi_{\omega}+6B_{\varphi_{\omega}}],
\end{equation}
where $\varphi_{\omega}=4\phi_c$, $B_{\varphi_{\omega}}$ is a
nonzero integration constant and $\omega:=\omega(c)=2c$. A positive
solution obtained from $(\ref{quadra})$ depending on the
\textit{cnoidal} Jacobi elliptic function (see Byrd and Friedman
\cite{byrd}) is given by
\begin{equation}  \label{cnoidalsol1}
\displaystyle\varphi_{\omega}(x)=
\beta_2+(\beta_3-\beta_2)cn^2\left(\sqrt{\frac{\beta_3-\beta_1}{12}}x;k\right),
\end{equation}
where $\beta_i$, $i=1,2,3$ and the modulus $k$ depend smoothly on
$\omega$ (and therefore on $c$).

As it is well-known, the main ingredient on the theory developed in
\cite{grillakis1}--\cite{grillakis3} is the spectral properties of
the linear operator arising in the linearized equation around the
traveling wave. Here, by making use of the Floquet theory we can
determine that matrix operators
\begin{equation}
\displaystyle\mathcal{L}_{\mathcal{R},cn}=\left(
\begin{array}{cccc}
\displaystyle-\frac{d^2}{dx^2}+2c-2\phi & -2\sqrt{2}\phi\\\\
-2\sqrt{2}\phi & \displaystyle-\frac{d^2}{dx^2}+2c
\end{array}\right)
\label{matrixopreal}\end{equation} and
\begin{equation}
\displaystyle\mathcal{L}_{\mathcal{I},cn}=\left(
\begin{array}{cccc}
\displaystyle-\frac{d^2}{dx^2}+2c-2\phi & 0\\\\
0 & 1
\end{array}\right)
\label{matrixopimag}
\end{equation}
have the spectral properties required in \cite{grillakis2} and
\cite{grillakis3}. This allow us to prove our stability/instability
results concerning the cnoidal solution in \eqref{cnoidalsol1}.

Secondly, we consider the case $f(s,t)=st$. In this case, if we
substitute \eqref{standingwaves} into \eqref{KGSCH} with
$\psi_c=\phi_c$, we obtain the differential equation (after
integrating once)
\begin{equation}    \label{quadracubic}
[\phi_c']^2=-\phi_c^4+2c\phi_c^2+2B_{\phi_{c}}.
\end{equation}
Thus, a {\it dnoidal} wave solution can be found:
\begin{equation}\label{dnoidalsol}
\phi_c(\xi)=\displaystyle\eta dn\left(\eta\xi;k\right),
\end{equation}
where $\eta>0$ depends smoothly of the wave-speed $c>0$.

 The linear operators arising in this case are:
\begin{equation}
\displaystyle\mathcal{L}_{\mathcal{R},dn}=\left(
\begin{array}{cccc}
\displaystyle-\frac{d^2}{dx^2}+2c-2\phi^2 & -4\phi^2\\\\
-4\phi^2 & \displaystyle-\frac{d^2}{dx^2}+2c-2\phi^2
\end{array}\right)
\label{matrixoprealdn}\end{equation} and
\begin{equation}
\displaystyle\mathcal{L}_{\mathcal{I},dn}=\left(
\begin{array}{cccc}
\displaystyle-\frac{d^2}{dx^2}+2c-2\phi^2 & 0\\\\
0 & 1
\end{array}\right),
\label{matrixopimagdn}
\end{equation}
which also possess the spectral properties needed in
\cite{grillakis2} and \cite{grillakis3} that guarantees our
stability/instability results.

We point out that our orbital stability results will be with respect
to periodic perturbations having the same fundamental period as the
corresponding traveling wave, whereas the instability results will
be with respect to periodic perturbations having twice the
fundamental period as the corresponding traveling wave.

 The question about global well-posedness in the energy space
$H_{per}^1([0,L])\times H_{per}^1([0,L])\times L_{per}^2([0,L])$,
associated to  system $(\ref{KGSCH})$ can be established by a direct
application of Kato's classical theory (see \cite{rabsztyn}). Many
other results of local and global well-posedness for these two
equations can be found in the current literature, as for example
\cite{bachelot}, \cite{bou1}, \cite{colliander}, \cite{tzirakis}.

In order to show the current findings, the paper is organized as
follows. In Section \ref{cnoidalsect} we present an explicit family
of periodic solutions related to equation $(\ref{KGSCH})$ when
$f(s,t)=s$ and study their orbital stability/instability. In Section
\ref{dnoidalsect} we consider the case $f(s,t)=st$ and establish the
existence and stability/instability of another family of periodic
solutions.

\vskip.5cm

\noindent {\bf Notation.}
 For $s\in\mathbb{R}$, the Sobolev space
$H_{per}^{s}([0,L])$ is the set of all periodic distributions such
that
$$
\displaystyle||f||_{H^s{[0,L]}}^2:=||f||_{s}^{2}\equiv
L\sum_{k=-\infty}^{+\infty}(1+|k|^2)^s|\widehat{f}(k)|^2 <\infty,
$$
where $\widehat{f}$ is the Fourier transform of $f$. The symbols
$sn(\cdot;k)$, $dn(\cdot;k)$ and $cn(\cdot;k)$ will denote the
Jacobian elliptic functions of \emph{snoidal}, \emph{dnoidal} and
\emph{cnoidal} type, respectively. Quantities Re($z$) and Im($z$)
denote, respectively, the real and imaginary parts of the complex
number $z$.

 Note that system \eqref{KGSCH}
can be written in three distinct form: besides \eqref{KGSCH}, we can
write it as a first-order system (in $t$) and, finally, separate the
real and imaginary parts of $u$. We use any one of these forms
without further comments.

\section{Orbital stability of cnoidal wave solutions for system
$(\ref{KGSCH})$}    \label{cnoidalsect}

This section is concerned with the existence and orbital
stability/instability of periodic solutions to the KG-NLS system
\begin{equation}\left\{\begin{array}{lll}
iu_t+\displaystyle\frac{1}{2}u_{xx}=-vu\\\\
v_{tt}-v_{xx}+m^2v=|u|^2
\end{array}\right.\label{KG-SCH1}\end{equation} of
the form
\begin{equation}   \label{standing}
(u(x,t),v(x,t))=(e^{ict}\sqrt{2}\phi_c(x),\phi_c(x))
\end{equation}
where $\phi_c:\mathbb{R}\rightarrow\mathbb{R}$ is a smooth positive
periodic function with a fixed period $L>0$, $t\in\mathbb{R}$ and
$c>0$. In fact, for $2c=m^2$, equation $(\ref{KG-SCH1})$ becomes
\begin{equation}
-\phi_c''+2c\phi_c-2\phi_c^2=0. \label{standingKGSCH2}
\end{equation}

\subsection{Existence of standing waves.}

Here our goal consists in showing that equation
$(\ref{standingKGSCH2})$ has a smooth branch, $c\in I\mapsto
\phi_c$, of positive periodic solutions with a fixed period $L>0$
for some parameter interval $I$. First we define
\begin{equation} \label{phivarphi}
\varphi_c:=4\phi_c.
\end{equation}
Letting $2c=\omega$, we obtain from $(\ref{standingKGSCH2})$ and
$(\ref{phivarphi})$ that
\begin{equation}
\displaystyle-\varphi_{\omega}''+\omega\varphi_{\omega}-\frac{1}{2}\varphi_{\omega}^2=0.
\label{EDOvarphi}\end{equation}

The next step is to deduce a $L-$periodic solution
$\varphi=\varphi_{\omega}$ for $(\ref{EDOvarphi})$. Indeed,
multiplying equation $(\ref{EDOvarphi})$ by $\varphi'$ and
integrating once, we get
\begin{equation}
\begin{array}{lll}
[\varphi']^2&=&\displaystyle\frac{1}{3}[-\varphi^3+3\omega\varphi^2
+6B_{\varphi}]\\\\
&=&\displaystyle\frac{1}{3}(\varphi-\beta_1)(\varphi-\beta_2)(\beta_3-\varphi),
\end{array}\label{quadra1KGSCH}
\end{equation}
where $B_{\varphi}$ is a nonzero integration constant and
$\beta_1,\beta_2,\beta_3$ are the zeros of the polynomial
$F(t)=-t^3+3\omega t^2+6B_{\varphi}$. We can suppose, without loss
of generality, that $\beta_3>\beta_2>\beta_1$ and $\beta_3>0$.
Therefore, we should have the relations
\begin{equation}
\displaystyle\left\{\begin{array}{lll}
\beta_1+\beta_2+\beta_3=3\omega,\\\\
\beta_2\beta_1+\beta_3\beta_1+\beta_3\beta_2=0,\\\\
\beta_1\beta_2\beta_3=6B_{\varphi},
\end{array}\right.\label{relroots}
\end{equation}

A periodic solution for the differential equation
$(\ref{EDOvarphi})$ is obtained from the standard {\it direct
integration method} (Byrd and Friedman \cite{byrd}, see also
\cite{angulo2}--\cite{angulo-linares}), namely,
\begin{equation}   \label{cnoidalsol}
\displaystyle\varphi_{\omega}(x)=
\beta_2+(\beta_3-\beta_2)cn^2\left(\sqrt{\frac{\beta_3-\beta_1}{12}}x;k\right),
\qquad k^2=\frac{\beta_3-\beta_2}{\beta_3-\beta_1}.
\end{equation}
Moreover, we conclude from identities $(\ref{relroots})$ and
$(\ref{cnoidalsol})$ that the roots $\beta_1,\beta_2,\beta_3$ must
satisfy $\beta_1<0<\beta_2<\beta_3<3\omega$ and function in
\eqref{cnoidalsol} has fundamental period given by
\begin{equation}\label{relbeta}
\displaystyle
T_{\varphi}=\frac{4\sqrt{3}}{\sqrt{\beta_3-\beta_1}}K(k_{\varphi}),
\end{equation}
where $k=k_{\varphi}$ is the so-called elliptic modulus and $K$
represents the complete elliptic integral of the first kind defined
by
$$
K(k)=\int_0^1\;\frac{dt}{\sqrt{(1-t^2)(1-k^2t^2)}}.
$$
\indent From relations in $(\ref{relroots})$ we obtain, after some
calculations, that
\begin{equation}
\beta_2^2+(\beta_3-3\omega)\beta_2+(\beta_3^2-3\omega\beta_3)=0,
\label{elipse}
\end{equation}
whose value of $\beta_2$ is given by
\begin{equation}\label{beta2}
\beta_2=\displaystyle\frac{1}{2}\left(3\omega-\beta_3+\sqrt{9\omega^2+6\omega\beta_3-3\beta_3^2}\right).
\end{equation}
Therefore, from $(\ref{relroots})$ and $(\ref{beta2})$, we deduce
\begin{equation}\label{beta31}
\beta_3-\beta_1=\displaystyle\frac{1}{2}\left(3\beta_3-3\omega+\sqrt{9\omega^2+6\omega\beta_3-3\beta_3^2}\right)
\end{equation}
and
\begin{equation}\label{beta32}
\beta_3-\beta_2=\displaystyle\frac{1}{2}\left(3\beta_3-3\omega-\sqrt{9\omega^2+6\omega\beta_3-3\beta_3^2}\right).
\end{equation}

Identities $(\ref{beta31})$ and $(\ref{beta32})$ able us to conclude
that the modulus $k$ must satisfy
\begin{equation}\label{modulusk}
k^2=\displaystyle\frac{3\beta_3-3\omega-\sqrt{9\omega^2+6\omega\beta_3-3\beta_3^2}}
{3\beta_3-3\omega+\sqrt{9\omega^2+6\omega\beta_3-3\beta_3^2}}.
\end{equation}
Moreover, thanks to $(\ref{beta32})$ we obtain the inequality
$0<\beta_2<2\omega<\beta_3<3\omega$.

Let $\omega>0$ be fixed, from $(\ref{modulusk})$, asymptotic
properties of $K$ and the fact that $\beta_3 \in (2\omega,3\omega)
\rightarrow T_{\varphi}(\beta_3)$ is a strictly increasing function
(see Theorem $\ref{teocurve}$ below) it follows that $\displaystyle
T_{\varphi}>2\pi/\sqrt{\omega}$. This means that for any $L>0$
fixed, choosing $\omega>0$ such that $\omega>4\pi^2/L^2$ there is a
unique $\beta_3=\beta_3(\omega) \in (2\omega,3\omega)$ such that the
corresponding cnoidal wave give by \eqref{cnoidalsol} has
fundamental period $T_\varphi=L$.

\begin{remark} \label{remark2.1}
The \textit{solitary wave} solution for equation $(\ref{EDOvarphi})$
can be determined from the asymptotic properties of Jacobi elliptic
function $cn$ given in $(\ref{cnoidalsol})$. In fact, for $\omega>0$
fixed, if $\beta_1,\beta_2\rightarrow0$ (then
$\beta_3\rightarrow3\omega$), we get, $k\rightarrow1^-$. On the
other hand, since $cn(\cdot,1^{-})\approx sech(\cdot)$, we obtain
the single-humped function
$$
\varphi_{\omega}(x)=3\omega \,
sech^2\left(\frac{\sqrt{\omega}}{2}x\right),
$$
which is the solitary wave solution for equation
$(\ref{EDOvarphi})$.
\end{remark}

Next, we shall construct a smooth curve, $\omega\mapsto
\varphi_{\omega}$, of cnoidal wave solutions for equation
$(\ref{EDOvarphi})$.

\begin{theorem}   \label{teocurve}
Let $L>0$ be arbitrary but fixed. Consider
$\omega_0>\displaystyle4\pi^2/L^2$ and the unique
$\beta_{3,0}=\beta_3(\omega_0)\in(2\omega_0,3\omega_0)$ such that
$T_{\varphi_{\omega_0}}=L$, then

\begin{enumerate}
  \item there exists an
interval $I(\omega_0)$ around  $\omega_0$, an interval
$B(\beta_{3,0})$ around  $\beta_{3,0}$ and a unique smooth function
$\Gamma:I(\omega_0)\rightarrow B(\beta_{3,0})$ such that
$\Gamma(\omega_0)=\beta_{3,0}$ and
$$
\displaystyle\frac{4\sqrt{6}}{\sqrt{3\beta_3-3\omega+\sqrt{9\omega^2+6\omega\beta_3-3\beta_3^2}}}K(k)=L,
$$
here $\omega\in I(\omega_0)$, $\beta_3=\Gamma(\omega)\in
B(\beta_{3,0})$ and $k^2=k^2(\omega)\in (0,1)$ is given by
$(\ref{modulusk})$.

  \item The cnoidal wave solution in $(\ref{cnoidalsol})$,
$\varphi_{\omega}(\cdot;\beta_1,\beta_2,\beta_3)$, determined by
$\beta_i=\beta_i(\omega)$, $i=1,2,3$, has fundamental period $L$ and
satisfies $(\ref{EDOvarphi})$. Moreover, the mapping
$$\omega\in
I(\omega_0)\mapsto \varphi_{\omega}\in H_{per}^n([0,L]), \qquad
n=0,1,\ldots
$$
is a smooth function.

  \item $I(\omega_0)$ can be chosen as $\displaystyle
\left(4\pi^2/L^2,+\infty\right)$.
\end{enumerate}
 \end{theorem}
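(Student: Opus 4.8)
The plan is to prove Theorem~\ref{teocurve} by a standard application of the Implicit Function Theorem applied to the period function, followed by a continuation/connectedness argument for part~(3). Define $G:(0,1)\times(0,\infty)\to\mathbb{R}$ (or more conveniently a function of $(\beta_3,\omega)$) by
\[
G(\beta_3,\omega)=\frac{4\sqrt{6}}{\sqrt{3\beta_3-3\omega+\sqrt{9\omega^2+6\omega\beta_3-3\beta_3^2}}}\,K(k(\beta_3,\omega))-L,
\]
where $k=k(\beta_3,\omega)$ is given by \eqref{modulusk}. By the construction preceding the theorem, $G(\beta_{3,0},\omega_0)=0$. The map $G$ is smooth in a neighborhood of $(\beta_{3,0},\omega_0)$ because: the radicand $9\omega^2+6\omega\beta_3-3\beta_3^2$ is strictly positive on the relevant range $\beta_3\in(2\omega,3\omega)$ (indeed it equals $3(3\omega-\beta_3)(\omega+\beta_3)>0$ there), so $k^2(\beta_3,\omega)\in(0,1)$ stays bounded away from $0$ and $1$, and $K(\cdot)$ is real-analytic on $(0,1)$; similarly the denominator $3\beta_3-3\omega+\sqrt{\cdots}$ is strictly positive on this range. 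Hence the composition is smooth.

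The key step is to verify $\partial_{\beta_3}G(\beta_{3,0},\omega_0)\neq 0$, which reduces (since $L$ is constant and the prefactor is nonzero) to showing that $\beta_3\mapsto T_\varphi(\beta_3)$ is strictly monotone — and this is exactly the claim ``$\beta_3\in(2\omega,3\omega)\mapsto T_\varphi(\beta_3)$ is a strictly increasing function'' already invoked in the text preceding the theorem. I would establish this monotonicity by writing $T_\varphi(\beta_3)=\frac{4\sqrt{3}}{\sqrt{\beta_3-\beta_1}}K(k)$ with $\beta_3-\beta_1$ and $k^2$ expressed through \eqref{beta31} and \eqref{modulusk} as explicit functions of $\beta_3$ (with $\omega$ fixed), then differentiating: $\frac{dT_\varphi}{d\beta_3}=\frac{d}{d\beta_3}\!\left(\frac{4\sqrt{3}}{\sqrt{\beta_3-\beta_1}}\right)K(k)+\frac{4\sqrt{3}}{\sqrt{\beta_3-\beta_1}}\frac{dK}{dk}\frac{dk}{d\beta_3}$. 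Using $\frac{dK}{dk}=\frac{E(k)-(1-k^2)K(k)}{k(1-k^2)}>0$ together with the fact that both $\beta_3-\beta_1$ is monotone in $\beta_3$ and $k^2$ is monotone in $\beta_3$ on $(2\omega,3\omega)$ (checked from the explicit formulas — $k^2\to 0$ as $\beta_3\to 2\omega^+$ and $k^2\to 1$ as $\beta_3\to 3\omega^-$), one concludes all terms have the same sign, giving $\frac{dT_\varphi}{d\beta_3}>0$. With $\partial_{\beta_3}G\neq0$ in hand, the IFT yields the interval $I(\omega_0)$, the interval $B(\beta_{3,0})$, and the unique smooth $\Gamma$ with $\Gamma(\omega_0)=\beta_{3,0}$ and $G(\Gamma(\omega),\omega)=0$, i.e.\ $T_{\varphi_\omega}=L$; this is part~(1).

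For part~(2), once $\beta_3=\Gamma(\omega)$ is a smooth function of $\omega$, the other roots $\beta_2=\beta_2(\omega)$ and $\beta_1=\beta_1(\omega)$ are smooth by \eqref{beta2} and the first relation in \eqref{relroots}, the modulus $k=k(\omega)$ is smooth by \eqref{modulusk}, and then $\varphi_\omega(x)=\beta_2+(\beta_3-\beta_2)cn^2\!\left(\sqrt{\frac{\beta_3-\beta_1}{12}}\,x;k\right)$ depends smoothly on $\omega$ as a map into $H_{per}^n([0,L])$ for every $n$, because $cn(\cdot;k)$ and all its $x$-derivatives depend smoothly on the modulus $k$ and on the scaling parameter, and differentiation in $x$ only brings down bounded smooth factors on a fixed period; that $\varphi_\omega$ has period $L$ and solves \eqref{EDOvarphi} is the defining construction. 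For part~(3), since the monotonicity argument of step~two is valid for \emph{every} $\beta_3\in(2\omega,3\omega)$ and every $\omega>4\pi^2/L^2$ — not just near $(\beta_{3,0},\omega_0)$ — the nonvanishing $\partial_{\beta_3}G\neq0$ holds globally, so the local solutions patch together: the set of $\omega\in(4\pi^2/L^2,\infty)$ for which a solution $\beta_3(\omega)$ exists is open (by IFT) and closed in $(4\pi^2/L^2,\infty)$ (by the a~priori bound $\beta_3\in(2\omega,3\omega)$ keeping $k^2$ away from $0,1$, so limits stay in the admissible range and solve $G=0$), hence equals all of $(4\pi^2/L^2,\infty)$ by connectedness, and uniqueness of $\beta_3$ for each $\omega$ follows again from strict monotonicity of $T_\varphi$ in $\beta_3$. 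The main obstacle I anticipate is the monotonicity verification $\frac{dT_\varphi}{d\beta_3}>0$: while conceptually routine, it requires carefully combining the signs of $\frac{d}{d\beta_3}(\beta_3-\beta_1)$, $\frac{dk^2}{d\beta_3}$, and the derivative of the complete elliptic integral, and this is precisely the step the text defers to ``see Theorem~\ref{teocurve} below,'' so it must be handled in full inside the proof.
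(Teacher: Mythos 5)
Your overall strategy (Implicit Function Theorem applied to the period map, smoothness of the roots and of $\varphi_\omega$, then an open--closed continuation argument for part (3)) is exactly the route the paper takes — its proof is deferred to Angulo--Linares \cite[Theorem 3.1]{angulo-linares}, which proceeds precisely this way. However, your verification of the crucial non-degeneracy condition $\partial_{\beta_3}G\neq0$, i.e.\ the strict monotonicity of $\beta_3\mapsto T_\varphi(\beta_3)$, contains a genuine gap. First, the factual claim is wrong: with $\omega$ fixed, $\beta_3-\beta_1=\tfrac{1}{2}\bigl(3\beta_3-3\omega+\sqrt{3(3\omega-\beta_3)(\omega+\beta_3)}\bigr)$ is \emph{not} monotone on $(2\omega,3\omega)$ — it equals $3\omega$ at both endpoints, increases up to $\beta_3=(1+\sqrt{3})\,\omega$ and decreases afterwards. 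Second, even granting monotonicity, your sign bookkeeping does not close: wherever $\beta_3-\beta_1$ is increasing, the term $\frac{d}{d\beta_3}\bigl(4\sqrt{3}\,(\beta_3-\beta_1)^{-1/2}\bigr)K(k)$ is \emph{negative} while the term containing $\frac{dK}{dk}\frac{dk}{d\beta_3}$ is positive, so ``all terms have the same sign'' fails and a quantitative comparison is unavoidable.

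The monotonicity is in fact delicate. Using the root relations \eqref{relroots} one finds $\beta_3-\beta_1=3\omega/\sqrt{1-k^2+k^4}$, hence
\begin{equation*}
T_\varphi=\frac{4}{\sqrt{\omega}}\,(1-k^2+k^4)^{1/4}K(k),
\end{equation*}
and one checks $\beta_3=\omega\bigl(1+(1+k^2)/\sqrt{1-k^2+k^4}\bigr)$ is strictly increasing in $k$, so everything reduces to showing that $k\mapsto(1-k^2+k^4)^{1/4}K(k)$ is strictly increasing on $(0,1)$. This cannot be read off from the signs of the two factors (the algebraic factor \emph{decreases} for $k<1/\sqrt{2}$), and near $k=0$ the product is increasing only at order $k^4$, so a genuine estimate combining $\frac{dK}{dk}=\frac{E-(1-k^2)K}{k(1-k^2)}$ with the derivative of the algebraic factor is required; this is exactly the computation carried out in the cited works (Angulo--Linares, Angulo--Bona--Scialom) and it is the step your proposal defers but does not actually supply. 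Your continuation argument for part (3) is fine in outline, though the a priori bound should come from $T_\varphi=L$ forcing $k$ into a compact subset of $(0,1)$ (since $T_\varphi\to 2\pi/\sqrt{\omega}$ as $k\to0^+$ and $T_\varphi\to\infty$ as $k\to1^-$), not merely from $\beta_3\in(2\omega,3\omega)$.
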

\begin{proof}
 The proof is  an application of the Implicit Function Theorem and
 it follows closely the arguments in Angulo and Linares \cite[Theorem
 3.1]{angulo-linares} (see also \cite{angulo3} and \cite{natali}). Thus, we omit it here.
\end{proof}

\begin{remark}
From the Implicit Function Theorem, Theorem $\ref{teocurve}$ and
$(\ref{relbeta})$ we can conclude that function
$\Gamma:I(\omega_0)\rightarrow B(\beta_{3,0})$ given in Theorem
$\ref{teocurve}$ is a strictly increasing function. Moreover,
$\displaystyle dk/d\omega>0$ (this fact can also be found  in
\cite{angulo1} or \cite{Pastor}).
\end{remark}

\begin{remark}\label{obsexist}
From Theorem $\ref{teocurve}$ and identity $(\ref{elipse})$ we are
in a position to conclude that $\displaystyle
\omega=(16K^2\sqrt{1-k^2+k^4})/L^2$,
$\displaystyle\beta_3=16K^2\left[\sqrt{1-k^2+k^4}+1+k^2\right]/L^2$,
$\beta_3-\beta_1=\displaystyle 48K^2/L^2$ and
$\beta_3-\beta_2=\displaystyle 48k^2K^2/L^2$.
\end{remark}

As a consequence of Theorem \ref{teocurve}, we immediately have:

\begin{corollary}   \label{corcurve}
Let $c\in\left(2\pi^2/L^2,+\infty\right)$ and $\omega(c)=2c$. Then
the cnoidal wave $\phi_c=\varphi_{\omega(c)}/4$, where
$\varphi_{\omega(c)}$ is give in Theorem \ref{teocurve}, has
fundamental period $L$ and satisfies $(\ref{standingKGSCH2})$.
Moreover, the mapping
$$
c\in\left(\frac{2\pi^2}{L^2},+\infty\right)\mapsto\phi_c\in
H_{per}^n([0,L]), \qquad n=0,1,\ldots
$$
is a smooth function. In addition, $\displaystyle dk/dc>0$.
\end{corollary}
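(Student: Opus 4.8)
The plan is to transport the conclusions of Theorem \ref{teocurve} through the two changes of variable already fixed in the paper, namely $\varphi_{\omega}=4\phi_c$ from \eqref{phivarphi} and $\omega=2c$. First I would observe that $c\mapsto\omega(c)=2c$ is a linear (hence smooth) bijection carrying the interval $\left(2\pi^2/L^2,+\infty\right)$ onto $\left(4\pi^2/L^2,+\infty\right)$, and that by part (3) of Theorem \ref{teocurve} this latter interval may be taken as the parameter interval $I(\omega_0)$ on which the smooth branch $\omega\mapsto\varphi_{\omega}$ is defined. Hence for each admissible $c$ there is a well-defined cnoidal profile $\varphi_{\omega(c)}$ of fundamental period $L$ solving \eqref{EDOvarphi}.

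Next I would set $\phi_c:=\varphi_{\omega(c)}/4$ and verify \eqref{standingKGSCH2}: inserting $\varphi_{\omega}=4\phi_c$ into \eqref{EDOvarphi} and dividing by $4$ produces $-\phi_c''+\omega\phi_c-2\phi_c^2=0$, which, since $\omega=2c$, is precisely \eqref{standingKGSCH2}. Because $\phi_c$ differs from $\varphi_{\omega(c)}$ only by the nonzero constant factor $1/4$, it keeps the same fundamental period $L$ (rescaling the amplitude does not change the period) and remains positive, as needed for the waveform \eqref{standing}.

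For the regularity statement I would merely compose maps: $c\mapsto\phi_c$ is the composition of the smooth map $c\mapsto\omega(c)=2c$, the smooth map $\omega\mapsto\varphi_{\omega}\in H_{per}^n([0,L])$ given by Theorem \ref{teocurve}(2), and the bounded linear map $g\mapsto g/4$ on $H_{per}^n([0,L])$; thus $c\mapsto\phi_c\in H_{per}^n([0,L])$ is smooth for every $n$. Finally, the chain rule yields $dk/dc=(dk/d\omega)(d\omega/dc)=2\,dk/d\omega$, and since $dk/d\omega>0$ by the remark following Theorem \ref{teocurve}, we conclude $dk/dc>0$. Every claim reducing to a trivial substitution or to the chain rule, I foresee no real obstacle; the one point worth a second look is the consistency of the coefficient $1/2$ in \eqref{EDOvarphi} with the coefficient $2$ in \eqref{standingKGSCH2} under the scaling $\varphi_{\omega}=4\phi_c$, which the computation above settles.
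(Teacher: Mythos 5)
Your proposal is correct and follows exactly the route the paper intends: the paper states the corollary as an immediate consequence of Theorem \ref{teocurve}, and your argument simply makes explicit the substitutions $\omega=2c$, $\varphi_{\omega}=4\phi_c$ (with the scaling check $-\phi_c''+2c\phi_c-2\phi_c^2=0$), the composition-of-smooth-maps observation, and the chain rule giving $dk/dc=2\,dk/d\omega>0$. Nothing is missing.
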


\subsection{Spectral analysis.}

We start establishing the basic framework for the stability study
introduced in \cite{grillakis2} and \cite{grillakis3}. As we have
already mentioned, we can write system  $(\ref{KG-SCH1})$ as a
Hamiltonian system
\begin{equation}\label{hamilto}
\displaystyle\frac{dU(t)}{dt}=J\mathcal{E}'(U(t)),
\end{equation}
where $U=(u_1,v_1,u_2,v_2)=(\mbox{Re}(u),v,\mbox{Im}(u),v_t)$, $J$
is the matrix defined in \eqref{matrixJ} and $\mathcal{E}$ is the
energy functional
\begin{equation}\begin{array}{lll}   \label{conservada}
\mathcal{E}(U)=\displaystyle\frac{1}{2}\int_{0}^{L}\left[u_{1,x}^2+u_{2,x}^2+v_2^2+v_{1,x}^2+m^2v_1^2
-2v_1(u_1^2+u_2^2)\right]dx.
\end{array}
\end{equation}

We remind the reader that system \eqref{KG-SCH1} also preserves the
$L^2$-norm of $u$, that is, the quantity
\begin{equation}\label{functF}
\mathcal{F}(U)=\displaystyle \int_{0}^{L}(u_1^2+u_2^2)dx
\end{equation}
is a conserved quantity of system \eqref{KG-SCH1}.

In what follow in this section we will denote $\Phi=(\sqrt2 \phi,
\phi,0,0)$, where $\phi=\phi_c$ is the cnoidal wave given in
Corollary \ref{corcurve}.

It is well-known that to apply the abstract  Grillakis \textit{et
al.} theory we need to study the spectral properties of operator
\begin{equation}     \label{Lcn}
\mathcal{L}_{cn}:= \mathcal{E}''(\Phi)+c\mathcal{F}''(\Phi)=
\left(\begin{array}{cccc}
\mathcal{L}_{\mathcal{R},cn} & 0\\\\
0 & \mathcal{L}_{\mathcal{I},cn}
\end{array}\right),
\end{equation}
where
\begin{equation}
\displaystyle\mathcal{L}_{\mathcal{R},cn}=\left(
\begin{array}{cccc}
\displaystyle-\frac{d^2}{dx^2}+2c-2\phi & -2\sqrt{2}\phi\\\\
-2\sqrt{2}\phi & \displaystyle-\frac{d^2}{dx^2}+2c
\end{array}\right)
\label{matrixop1}
\end{equation}
 and
\begin{equation}  \label{matrixop2}
\displaystyle\mathcal{L}_{\mathcal{I},cn}=\left(
\begin{array}{cccc}
\displaystyle-\frac{d^2}{dx^2}+2c-2\phi & 0\\\\
0 & 1
\end{array}\right).
\end{equation}

We begin by studying the spectra of operators
$\mathcal{L}_{\mathcal{R},cn}$ and $\mathcal{L}_{\mathcal{I},cn}$.
More precisely, we have:

\begin{theorem}   \label{teoeigenRe}
Let $\phi=\phi_{c}$ be the \textit{cnoidal} wave solution given by
Corollary \ref{corcurve}. Then
\begin{itemize}
  \item[(i)] operator $\mathcal{L}_{\mathcal{R},cn}$ in \eqref{matrixop1}
  defined in $L_{per}^2([0,L])\times L_{per}^2([0,L])$ whose domain
    is $H_{per}^2([0,L])\times H_{per}^2([0,L])$ has exactly one negative
    eigenvalue which is simple; zero is a simple eigenvalue
whose eigenfunction is $(2\phi'/3, \sqrt2 \phi'/3)$. Moreover, the
remainder of the spectrum is constituted by a discrete set of
eigenvalues.

  \item [(ii)] Operator $\mathcal{L}_{\mathcal{I},cn}$ in \eqref{matrixop2}
  defined in $L_{per}^2([0,L])\times L_{per}^2([0,L])$ whose domain
    is $H_{per}^2([0,L])\times L_{per}^2([0,L])$ has only non-negative
eigenvalues being zero the first one which is simple with
eigenfunction $(\phi,0)$. Moreover, the remainder of the spectrum is
constituted by a discrete set of eigenvalues.
\end{itemize}
\end{theorem}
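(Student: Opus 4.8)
The plan is to analyze the two block operators separately, reducing each spectral question to scalar Hill operators whose periodic eigenvalue problems are governed by Floquet theory together with the explicit form of the cnoidal wave. For part (i), I would begin by diagonalizing $\mathcal{L}_{\mathcal{R},cn}$ through an orthogonal change of variables that decouples the coupling term $-2\sqrt2\phi$. Writing $w_\pm$ for the new coordinates, the off-diagonal term forces one to solve the $2\times 2$ constant-coefficient eigenvalue problem for the "potential matrix" $\begin{pmatrix}-2\phi & -2\sqrt2\phi\\ -2\sqrt2\phi & 0\end{pmatrix}$; its eigenvalues are $-3\phi$ and $\phi$ (indeed $-2\phi + (-\phi) \mp \sqrt{\phi^2 + 8\phi^2}$ up to the shift, and one checks $\lambda^2+2\phi\lambda-8\phi^2=0$ gives $\lambda = 2\phi, -4\phi$, so after the $-2\phi$ shift the effective potentials are $-3\phi$ and $\phi$). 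However, since $\phi$ is not constant this diagonalization is only pointwise and does not literally decouple the differential operator; the honest route is instead to observe that $(2\phi'/3,\sqrt2\phi'/3)$ lies in the kernel — this follows by differentiating \eqref{standingKGSCH2}, which gives $-\phi'''+2c\phi'-4\phi\phi'=0$, and verifying directly that this vector is annihilated by $\mathcal{L}_{\mathcal{R},cn}$ — and then to count the sign changes of $\phi'$ over one period. Since $\phi=\phi_c$ is (after translation) even with a single maximum and single minimum per period, $\phi'$ has exactly two zeros in $[0,L)$, so the kernel element has two nodes. By the standard oscillation theorem for (systems of) Hill operators, an eigenfunction with two sign changes must be the third eigenfunction in the periodic ordering, which places exactly one negative periodic eigenvalue below it; that this negative eigenvalue and the zero eigenvalue are each simple, and that the rest of the spectrum is a discrete set tending to $+\infty$, are then consequences of the compact resolvent (the operator has compact resolvent on the torus) and of Floquet theory applied to the decoupled scalar problems obtained by a genuine (global, constant) similarity transformation that I would set up carefully — this constant transformation exists because $\mathcal L_{\mathcal R,cn}$ has the structure $-\partial_x^2 I + V(x)$ with $V(x) = 2c\, I + \phi(x) M$ for a fixed symmetric matrix $M = \begin{pmatrix}-2 & -2\sqrt2\\ -2\sqrt2 & 0\end{pmatrix}$, and diagonalizing the constant matrix $M$ does decouple the whole operator into two scalar Hill operators $-\partial_x^2 + 2c + \mu_j\phi(x)$ with $\mu_1 = -4$, $\mu_2 = 2$.

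Once that reduction is in place, part (i) becomes: for $-\partial_x^2 + 2c - 4\phi$ and $-\partial_x^2 + 2c + 2\phi$, read off the first few periodic eigenvalues. For the first operator, I expect to recognize it (possibly after rescaling $x$ and using Remark \ref{obsexist} to write everything in terms of $K$ and $k$) as a Lamé-type operator $-\partial_x^2 + 2c - 4\phi$ whose potential is $\beta$-times $cn^2$, a classical Hill operator with finitely many instability intervals; its first three periodic eigenvalues $\nu_0 < \nu_1 = \nu_2 < \cdots$ (or $\nu_0 < \nu_1 \le \nu_2$) can be computed explicitly, and one checks $\nu_0 < 0 = \nu_1$, which is where the single negative eigenvalue of the whole block comes from, while $(2\phi'/3,\sqrt2\phi'/3)$ corresponds under the similarity transform to $\phi'$ itself sitting at the zero eigenvalue. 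The second scalar operator $-\partial_x^2 + 2c + 2\phi$ is strictly positive (its potential $2c+2\phi > 0$), contributing nothing below zero. The main obstacle here is the explicit spectral computation for the Lamé operator: identifying exactly which periodic eigenvalue equals zero and confirming its simplicity requires either invoking a known Lamé spectrum table or doing the Floquet discriminant analysis by hand using the relations in Remark \ref{obsexist}; this is the only genuinely delicate point, and I would handle it by comparison with the well-documented spectrum of the operator associated to the cnoidal wave of the scalar Klein--Gordon equation treated in \cite{natali-pastor}.

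For part (ii), the operator $\mathcal{L}_{\mathcal{I},cn}$ is already block-diagonal: the $(2,2)$ entry is the identity on $L^2_{per}$, which has spectrum $\{1\}$, entirely positive. So the spectral behavior below and at zero is dictated solely by the scalar Hill operator $\mathcal{L}_1 := -\partial_x^2 + 2c - 2\phi$ acting in $H^2_{per}([0,L])$. From \eqref{standingKGSCH2}, $\mathcal{L}_1\phi = -\phi'' + 2c\phi - 2\phi^2 = 0$, so $\phi$ lies in the kernel of $\mathcal{L}_1$; since $\phi > 0$ on all of $[0,L]$ (Corollary \ref{corcurve}), it has no zeros, hence by the oscillation theorem it is the ground-state eigenfunction of $\mathcal{L}_1$, so $0$ is the smallest periodic eigenvalue of $\mathcal{L}_1$ and it is simple. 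Consequently $\mathcal{L}_{\mathcal{I},cn}$ has no negative eigenvalues, $0$ is a simple eigenvalue with eigenfunction $(\phi,0)$, and the remainder of its spectrum — the higher Hill eigenvalues of $\mathcal{L}_1$ together with the point $\{1\}$ from the identity block — is a discrete set accumulating only at $+\infty$, again because the resolvent is compact. I expect part (ii) to be essentially routine once the positivity of $\phi$ and the oscillation theorem are cited; the entire weight of the theorem rests on the Lamé spectral identification in part (i).
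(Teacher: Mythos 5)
Your proposal follows essentially the same route as the paper: the constant similarity transformation decouples $\mathcal{L}_{\mathcal{R},cn}$ into the scalar Hill operators $-\frac{d^2}{dx^2}+2c-4\phi$ and $-\frac{d^2}{dx^2}+2c+2\phi$, the first being analyzed through the Lam\'e equation with potential $12k^2sn^2(\cdot;k)$ (the paper invokes Ince's explicit Lam\'e polynomials to see that zero is the second periodic eigenvalue, with exactly one simple negative eigenvalue below it and eigenfunction $\phi'$), the second being strictly positive since $\phi>0$, and part (ii) is exactly your argument that $\phi>0$ lies in the kernel of $-\frac{d^2}{dx^2}+2c-2\phi$ and is therefore the simple ground state. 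The only caveat is that your first-paragraph oscillation claim (``two sign changes $\Rightarrow$ third eigenfunction $\Rightarrow$ exactly one negative eigenvalue'') is not correct as stated --- two zeros of $\phi'$ only place zero as the second or third periodic eigenvalue --- and it is precisely the explicit Lam\'e eigenvalue computation, which you correctly identify as the delicate point, that excludes the third alternative.
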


We point out that from Weyl's essential spectrum theorem, all
operators we study here have only point spectrum.

Before proving Theorem \ref{teoeigenRe}, we note that operator
$\mathcal{L}_{\mathcal{R},cn}$ can be diagonalized under a
similarity transformation. In fact, we consider
$$
A_{\mathcal{R}}=\left(\begin{array}{cccc}
 1 & 1/\sqrt{2}\\\\
-1/\sqrt{2} & 1
\end{array}\right).
$$
Then operator
$\mathcal{L}_{D\mathcal{R}}:=A_{\mathcal{R}}\mathcal{L}_{\mathcal{R},cn}A_{\mathcal{R}}^{-1}$,
is a diagonal operator given by
\begin{equation}
\displaystyle\mathcal{L}_{D\mathcal{R}}=\left(
\begin{array}{cccc}
\mathcal{L}_{1,cn} & 0\\\\
0 & \mathcal{L}_{3,cn}
\end{array}\right),
\label{matrixop4}
\end{equation}
with
\begin{equation}\label{L1cn}
\mathcal{L}_{1,cn}=-\frac{d^2}{dx^2}+2c-4\phi
\end{equation}
and
\begin{equation}\label{L3cn}
\mathcal{L}_{3,cn}=-\frac{d^2}{dx^2}+2c+2\phi.
\end{equation}

The next lemma give us the spectral properties of operators
$\mathcal{L}_{1,cn}$ and $\mathcal{L}_{2,cn}$, where
$\mathcal{L}_{1,cn}$ is defined in \eqref{L1cn} and
\begin{equation}\label{L2cn}
\mathcal{L}_{2,cn}=-\frac{d^2}{dx^2}+2c-2\phi.
\end{equation}

\begin{lemma} \label{lemmaAng}
Let $\phi=\phi_{c}$ be the cnoidal wave given by Corollary
\ref{corcurve}. Then the following spectral properties hold:
\begin{itemize}
    \item [(i)]  operator $\mathcal{L}_{1,cn}$ in $(\ref{L1cn})$ defined
    in $L^2_{per}([0,L])$ with domain $H_{per}^2([0,L])$ has exactly one negative
    eigenvalue which is simple;  zero is an eigenvalue which is
    simple with eigenfunction $\phi'$. Moreover, the remainder
    of the spectrum is constituted by a discrete set of
    eigenvalues.
    \item [(ii)] Operator $\mathcal{L}_{2,cn}$ in $(\ref{L2cn})$
    defined in $L^2_{per}([0,L])$ with domain $H_{per}^2([0,L])$
    has no negative eigenvalues; zero is an eigenvalue, simple
    with eigenfunction $\phi$. Moreover, the remainder
    of the spectrum is constituted by a discrete set of
    eigenvalues.
\end{itemize}
\end{lemma}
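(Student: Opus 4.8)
The plan is to recognize both operators in Lemma \ref{lemmaAng} as periodic Hill operators of the form $-d^2/dx^2 + q(x)$, where the potential $q$ is built out of the cnoidal profile $\phi = \phi_c = \varphi_\omega/4$, and then to reduce the spectral count to an explicit Lam\'e-type eigenvalue problem via Floquet (oscillation) theory. First I would use \eqref{EDOvarphi}, i.e. $-\varphi_\omega'' + \omega\varphi_\omega - \tfrac12\varphi_\omega^2 = 0$ with $\omega = 2c$ and $\varphi_\omega = 4\phi$, to express the potentials: $2c - 4\phi = \omega - \varphi_\omega$ and $2c - 2\phi = \omega - \tfrac12\varphi_\omega$. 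Substituting the explicit cnoidal form \eqref{cnoidalsol}, $\varphi_\omega(x) = \beta_2 + (\beta_3-\beta_2)\,cn^2(\ell x;k)$ with $\ell = \sqrt{(\beta_3-\beta_1)/12}$, turns $\mathcal{L}_{1,cn}$ and $\mathcal{L}_{2,cn}$, after the rescaling $y = \ell x$, into operators of the form $-d^2/dy^2 + a - b\,cn^2(y;k)$ with $b = (\beta_3-\beta_2)/\ell^2 = 12k^2$ (using Remark \ref{obsexist}) and an appropriate constant $a$. This is the classical Lam\'e operator $\Lambda = -d^2/dy^2 - n(n+1)k^2\,sn^2(y;k)$ (up to the identity $sn^2 = 1 - cn^2$) with $n = 3$ for $\mathcal{L}_{1,cn}$ and $n = ?$ for $\mathcal{L}_{2,cn}$; I expect $\mathcal{L}_{1,cn}$ to land on the $n = 2$ Lam\'e operator whose first three eigenvalues and band structure are completely known, and likewise identify the correct Lam\'e index for $\mathcal{L}_{2,cn}$.

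Next I would invoke the standard Floquet/oscillation theory for Hill's equation on $[0,L]$: the spectrum is a countable sequence $\lambda_0 < \lambda_1 \le \lambda_2 < \lambda_3 \le \lambda_4 < \cdots$ where $\lambda_0$ is simple with a nonvanishing (one-signed) eigenfunction, $\lambda_1,\lambda_2$ are the endpoints of the first gap with antiperiodic eigenfunctions having exactly two zeros in $[0,L)$, $\lambda_3,\lambda_4$ the next periodic pair with four zeros, and so on. For $\mathcal{L}_{1,cn}$: differentiating \eqref{EDOvarphi} gives $\mathcal{L}_{1,cn}\phi' = 0$ (since $\varphi_\omega' = 4\phi'$ satisfies the linearized equation), so $0$ is an eigenvalue; $\phi'$ has exactly two zeros per period (one from each $cn$-zero, as $\phi$ is even about the crest and a trough), which places $0$ at the boundary of the first spectral gap. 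To pin down that $0 = \lambda_2$ (so that there is exactly one negative eigenvalue and $0$ is simple), I would exhibit the extra periodic eigenfunction below: for the $n=2$ Lam\'e operator the $cn^2$-type eigenfunction, which translates back to a quadratic polynomial in $\phi$, is a nonvanishing periodic eigenfunction with eigenvalue strictly below $0$, forcing $\lambda_0 < \lambda_1 = \lambda_2 = 0$ (or at worst $\lambda_1 < \lambda_2 = 0$; in either case exactly one negative eigenvalue, which is simple, and $\ker$ one-dimensional since $\lambda_2$ is the top of the first gap). For $\mathcal{L}_{2,cn}$: from \eqref{EDOvarphi} directly, $\mathcal{L}_{2,cn}\phi = 0$ because $-\phi'' + 2c\phi - 2\phi\cdot\phi = \tfrac14(-\varphi_\omega'' + \omega\varphi_\omega - \tfrac12\varphi_\omega^2) = 0$; since $\phi > 0$ has no zeros, this eigenfunction is the ground state, so $0 = \lambda_0$, it is simple, and there are no negative eigenvalues. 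The discreteness of the remainder of the spectrum is immediate from compactness of the resolvent on $[0,L]$ (Weyl's theorem, as noted after Theorem \ref{teoeigenRe}).

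The main obstacle will be step (i): verifying that $0$ is the \emph{second} eigenvalue $\lambda_2$ of $\mathcal{L}_{1,cn}$ and not, say, $\lambda_1$ with a nontrivial gap still open, which is exactly what guarantees simplicity of $\ker\mathcal{L}_{1,cn}$ and exactly one negative eigenvalue. The cleanest route is to use the known structure of the $n=2$ (or $n=3$) Lam\'e equation, where the finitely many band-edge eigenfunctions are explicit polynomials in $sn, cn, dn$: one computes (this is a short but careful calculation I would carry out) that apart from $\phi'$ there is exactly one sign-definite periodic eigenfunction with a \emph{negative} eigenvalue and no further periodic or antiperiodic eigenvalue in $(-\infty, 0]$, so the list of eigenvalues up to $0$ is precisely $\lambda_0 < \lambda_1 \le \lambda_2 = 0$ with the eigenfunction at $0$ (namely $\phi'$) having two zeros, hence $\lambda_1 = \lambda_2 = 0$ would contradict $\phi'$ being antiperiodic-type unless the gap is closed — I would instead argue directly that $\lambda_1 < 0$ by producing a test/eigen-function (the antiperiodic Lam\'e function) with a strictly negative Rayleigh quotient and two zeros, leaving $\lambda_2 = 0$ simple. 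This Lam\'e-classification argument, possibly supplemented by a continuity/deformation argument in $k$ anchored at the known $k \to 0$ (constant-coefficient) and $k \to 1$ (soliton, $\mathcal{L}_{1}$ on the line with one negative eigenvalue and kernel spanned by $\phi'$) limits, is the technical heart; everything else is bookkeeping with Floquet discriminants. For part (ii) there is no obstacle: $\phi > 0$ annihilated by $\mathcal{L}_{2,cn}$ is automatically the ground state.
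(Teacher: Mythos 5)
Your overall framework coincides with the paper's: rescale the operators into Lam\'e form, apply Floquet/oscillation theory for Hill operators, and dispose of part (ii) by observing $\mathcal{L}_{2,cn}\phi=0$ with $\phi>0$, so $\phi$ is the ground state. However, the plan for the decisive point of part (i) contains genuine errors. First, the index: since the coefficient of $sn^2$ after the rescaling is $12k^2$ (as you yourself compute), the relevant problem is the $n=3$ Lam\'e equation \eqref{lame1}, not the $n=2$ one you say you ``expect''; the sign-definite eigenfunction you propose to exhibit (a ``$cn^2$-type'' quadratic polynomial in $sn$) is the $n=2$ ground state and is \emph{not} an eigenfunction of \eqref{lame1}. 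The functions actually needed are the Ince polynomials \eqref{pollame}, $\Psi_{0,2}=dn(x)\,[1-(1+2k^2\mp\sqrt{1-k^2+4k^4})sn^2(x)]$, together with $\Psi_1=sn\,cn\,dn$, which corresponds to $\phi'$ and to the eigenvalue zero via \eqref{delta}.

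Second, and more seriously, your strategy for locating zero points in the wrong direction. For the $L$-periodic problem the only relevant eigenvalues are the periodic ones; since $\phi'$ has two zeros per period, zero is the second or third periodic eigenvalue, and the lemma holds precisely when the \emph{other} member of that pair is strictly \emph{positive}. You instead propose to show ``$\lambda_1<0$ by producing the antiperiodic Lam\'e function with negative Rayleigh quotient, leaving $\lambda_2=0$.'' Read inside the $L$-periodic spectrum this would produce two negative eigenvalues and contradict the lemma; read with $\lambda_1$ antiperiodic it is irrelevant, because semi-periodic eigenvalues do not belong to $\mathcal{L}_{1,cn}$ on $L^2_{per}([0,L])$ --- and in fact they \emph{are} negative here (this is exactly the content of Lemma \ref{lemmaAng1}, used for the $[0,2L]$ instability analysis), so your claim that there is ``no further periodic or antiperiodic eigenvalue in $(-\infty,0]$'' is false. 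Your fallback assertion that in the case $\lambda_1=\lambda_2=0$ the kernel is still one-dimensional is also wrong: a closed gap at zero means a two-dimensional periodic kernel. The missing step, which is the paper's actual argument, is to check from \eqref{eigen1} that $\delta_0<\delta_1=4+4k^2<\delta_2$ for all $k\in(0,1)$: $\Psi_0$ has no zeros, so $\delta_0$ is the simple ground state and $\delta_0<\delta_1$ gives, through \eqref{delta}, exactly one negative eigenvalue; $\delta_1<\delta_2$ makes the third periodic eigenvalue positive, so zero is the second eigenvalue, simple, with eigenfunction $\phi'$. (Also, as a minor point, your Floquet bookkeeping mislabels the periodic pair $\lambda_1,\lambda_2$ as antiperiodic; antiperiodic eigenfunctions have an odd number of zeros in $[0,L)$.)
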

\begin{proof}
(i) The main point is that the periodic eigenvalue problem
associated to operator $\mathcal{L}_{1,cn}$,
\begin{equation}\label{op1}
\left\{
\begin{array}{lll}
\mathcal{L}_{1,cn}f=\lambda f \\\\
f(0)=f(L), \quad f'(0)=f'(L),
\end{array}
\right.
\end{equation}
is equivalent (under the transformation $\Psi(x)=f(\theta x),
\theta^2=12/(\beta_3-\beta_1)$) to the periodic eigenvalue problem
\begin{equation}\label{lame1}
\left\{
\begin{array}{lll}
\displaystyle\frac{d^2}{dx^2}\Psi+[\delta-12k^2sn^2(x;k)]\Psi=0\\\\
\Psi(0)=\Psi(2K(k)),\;\;\Psi'(0)=\Psi'(2K(k)),
\end{array}
\right.
\end{equation}
associated to the Lam\'e equation with

\begin{equation}\label{delta}
\delta=\frac{12}{\beta_3-\beta_1}(\lambda+\beta_3-2c)=\frac{12}{\beta_3-\beta_1}(\lambda+\beta_3-\omega).
\end{equation}
The proof of this item follows from Floquet Theory and the basic
ideas can be found in \cite[Theorem 4.1]{angulo-linares} (see also
\cite{angulo2}). However, for the sake of clearness we will list the
basic facts. Indeed, since $\mathcal{L}_{1,cn}\phi'=0$ and $\phi'$
has two zeros in $[0,L)$, then \textit{zero} is the second or the
third eigenvalue of $\mathcal{L}_{1,cn}$. Next, since
$\delta_1=4+4k^2$ is an eigenvalue to $(\ref{lame1})$ with
eigenfunction $\Psi_1(x)=cn(x)sn(x)dn(x)$, we have that
$\lambda_1=0$ is a eigenvalue to $(\ref{op1})$ whose eigenfunction
is $\phi'$. On the other hand, from Ince \cite{In} we have that the
\textit{Lam\'e polynomials},
\begin{equation}\label{pollame}
\begin{array}{lll}
\Psi_0(x)=dn(x)[1-(1+2k^2-\sqrt{1-k^2+4k^4})sn^2(x)],\\\\
\Psi_2(x)=dn(x)[1-(1+2k^2+\sqrt{1-k^2+4k^4})sn^2(x)],
\end{array}\end{equation}
are the eigenfunctions related to other two eigenvalue $\delta_0$,
$\delta_2$ given by,
\begin{equation}\label{eigen1}
\delta_0=2+5k^2-2\sqrt{1-k^2+4k^2},\ \ \
\delta_2=2+5k^2+2\sqrt{1-k^2+4k^2}.
\end{equation}
Since $\Psi_0$ has no zeros in $[0,2K]$ it follows that $\delta_0$
will be the first eigenvalue to $(\ref{lame1})$. Since
$\delta_0<\delta_1$ for all $k\in(0,1)$, we obtain from
$(\ref{delta})$ that $\lambda_0<0$. Therefore $\lambda_0$ is the
first eigenvalue to $\mathcal{L}_{1,cn}$ which is simple. Moreover,
since $\delta_1<\delta_2$ for every $k\in(0,1)$, we obtain from
$(\ref{delta})$ that $\lambda_2>0$. This fact implies that
$\lambda_2$ is the third eigenvalue to $\mathcal{L}_{1,cn}$ and
therefore $\lambda_1=0$ results to be simple.

 (ii) It follows immediately from the Floquet theory since
$\mathcal{L}_{2,cn}\phi=0$ and $\phi$ has no zeros in the interval
$[0,L]$.
\end{proof}

\begin{proof}[Proof of Theorem $\ref{teoeigenRe}$] {\rm {(i)}}
Since $\phi$ is strictly positive, it follows that operator
$\mathcal{L}_{3,cn}$ is strictly positive and
$\sigma(\mathcal{L}_{3,cn})\geq 2c$, where
$\sigma(\mathcal{L}_{3,cn})$ denotes the spectrum of
$\mathcal{L}_{3,cn}$. Next, let
$\overrightarrow{f}=(g,h)^{t}\neq\overrightarrow{0}$ be such that
$\mathcal{L}_{D\mathcal{R}}\overrightarrow{f}=\overrightarrow{0}$,
then $\mathcal{L}_{1,cn}g=0$ and $\mathcal{L}_{3,cn}h=0$. Thus, from
Lemma \ref{lemmaAng} we have $h\equiv0$ and $g=\alpha\phi'$, for
some nonzero real constant $\alpha$. Hence, the kernel of
$\mathcal{L}_{D\mathcal{R}}$ is generated by $(\phi',0)^{t}$. This
implies that the kernel of $\mathcal{L}_{\mathcal{R},cn}$ is
1-dimensional and generated by $(2\phi'/3, \sqrt2 \phi/3)$.

 Next we consider $\lambda<0$ and $\overrightarrow{f}=(g,h)^{t}\neq\overrightarrow{0}$ such that
$\mathcal{L}_{D\mathcal{R}}\overrightarrow{f}=\lambda\overrightarrow{f}$,
then $h\equiv0$ and $\mathcal{L}_{1,cn}g=\lambda g$. Therefore, from
Lemma \ref{lemmaAng}  we must have $\lambda=\lambda_0$ and $g=\beta
\chi_0$, where $\lambda_0$ is the unique negative eigenvalue of
$\mathcal{L}_{1,cn}$ and $\chi_0$ is the corresponding
eigenfunction. This implies part (i) of the lemma.

{\rm {(ii)}} It follows immediately from Lemma \ref{lemmaAng} since
operator $\mathcal{L}_{2,cn}$ has no negative eigenvalues and zero
is a simple eigenvalue.
\end{proof}

Theorem \ref{teoeigenRe} give us the spectral properties needed to
prove our stability results when we consider periodic perturbations
having the same fundamental period as the corresponding wave itself.
However, we are also interested in perturbations having twice the
fundamental period as the corresponding wave. In this regard, the
following lemma is useful.

\begin{lemma}  \label{lemmaAng1}
Let $\phi=\phi_c$ be the cnoidal wave solution  given by Corollary
\ref{corcurve}. Then, the linear operator $\mathcal{L}_{1,cn}$ in
\eqref{L1cn} defined in $L^2_{per}([0,2L])$ with domain
$H_{per}^2([0,2L])$ has its first four eigenvalues simple, being the
eigenvalue zero the fourth one with eigenfunction $\phi'$. Moreover,
if $\chi_1$ and $\chi_2$ denote the eigenfunctions associated to the
second and third eigenvalues then $\chi_i \perp \phi$, $i=1,2$.
\end{lemma}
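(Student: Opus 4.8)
The plan is to reduce everything, exactly as in the proof of Lemma~\ref{lemmaAng}, to the Lam\'e equation and then to exploit the classical oscillation theorem for Hill's equation. Under the change of variables $\Psi(x)=f(\theta x)$, $\theta^2=12/(\beta_3-\beta_1)$, the periodic eigenvalue problem for $\mathcal{L}_{1,cn}$ posed on $[0,2L]$ — with $\lambda$ and $\delta$ related by \eqref{delta} — becomes the periodic problem for $\Psi''+[\delta-12k^2sn^2(x;k)]\Psi=0$ on the interval $[0,4K(k)]$, whose length is twice the minimal period $2K(k)$ of the potential. Hence its spectrum is the union of the $2K$-periodic eigenvalues $\delta_0<\delta_1\le\delta_2<\cdots$ and the $2K$-semiperiodic (anti-periodic) eigenvalues $\mu_1\le\mu_2<\mu_3\le\mu_4<\cdots$, and the oscillation theorem for Hill's equation gives the interlacing $\delta_0<\mu_1\le\mu_2<\delta_1\le\delta_2<\mu_3\le\mu_4<\cdots$. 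From the proof of Lemma~\ref{lemmaAng} we already know that $\delta_0$ is simple and that $\delta_1=4+4k^2<\delta_2$ is simple, with eigenfunction $\Psi_1(x)=cn(x)sn(x)dn(x)$, which pulls back to $\phi'$ and corresponds to $\lambda=0$. Therefore the first four eigenvalues of the $[0,2L]$-problem are $\delta_0,\mu_1,\mu_2,\delta_1$ in increasing order, the fourth being $\lambda=0$ with eigenfunction $\phi'$; moreover $\delta_0$ is simple (bottom of the spectrum) and $\delta_1$ is simple on $[0,2L]$ since its only $2K$-periodic eigenfunction is $\Psi_1$ while $\mu_2<\delta_1$ excludes an anti-periodic contribution.

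The one remaining point is the strict inequality $\mu_1<\mu_2$. For this I would compute the lowest semiperiodic eigenvalues explicitly through the Lam\'e polynomials of $sn$- and $cn$-type for $n=3$. Inserting $\Psi=sn(x)(1-\gamma sn^2(x))$ and $\Psi=cn(x)(1-\gamma sn^2(x))$ into the Lam\'e equation and solving the resulting quadratic in $\gamma$ produces the four semiperiodic eigenvalues $5+5k^2\pm2\sqrt{4-7k^2+4k^4}$ and $5+2k^2\pm2\sqrt{4-k^2+k^4}$. For $k\in(0,1)$ one checks that $0<\gamma_-<1$ in the two minus-sign cases, so there the factor $1-\gamma_-sn^2$ never vanishes and the eigenfunction has exactly one zero in $[0,2K)$, while $\gamma_+>1$ forces three zeros in the plus-sign cases. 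Since the oscillation theorem characterizes $\mu_1,\mu_2$ as precisely the two semiperiodic eigenvalues whose eigenfunctions have a single zero in $[0,2K)$, we get $\{\mu_1,\mu_2\}=\{\,5+2k^2-2\sqrt{4-k^2+k^4},\ 5+5k^2-2\sqrt{4-7k^2+4k^4}\,\}$; and since $4-k^2+k^4-(4-7k^2+4k^4)=3k^2(2-k^2)>0$, the difference $(5+5k^2-2\sqrt{4-7k^2+4k^4})-(5+2k^2-2\sqrt{4-k^2+k^4})>3k^2>0$. Hence $\mu_1<\mu_2$, and together with the interlacing this makes each of the first four eigenvalues of $\mathcal{L}_{1,cn}$ on $[0,2L]$ simple.

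The orthogonality is then immediate from the Floquet type of $\chi_1,\chi_2$: being pull-backs of $2K$-semiperiodic Lam\'e functions, they satisfy $\chi_i(x+L)=-\chi_i(x)$, whereas $\phi$ has period $L$. Splitting $\int_0^{2L}$ into $\int_0^{L}+\int_L^{2L}$ and substituting $x\mapsto x+L$ in the second integral gives $\int_0^{2L}\chi_i\phi\,dx=\int_0^{L}\chi_i\phi\,dx-\int_0^{L}\chi_i\phi\,dx=0$, so $\chi_i\perp\phi$ in $L^2_{per}([0,2L])$, $i=1,2$.

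The step I expect to be the main obstacle is the middle one: pinning down $\mu_1$ and $\mu_2$ as specific Lam\'e polynomial eigenvalues — which forces one to match the zero count prescribed by the oscillation theorem — and then verifying the strict inequality $\mu_1<\mu_2$ uniformly for $k\in(0,1)$. Once those explicit eigenvalues are in hand, everything else is a routine application of Floquet theory resting on Lemma~\ref{lemmaAng}.
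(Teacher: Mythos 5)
Your proposal is correct and follows essentially the same route as the paper: reduce to the Lam\'e equation, use the explicit semi-periodic Lam\'e polynomial eigenvalues $5+2k^2-2\sqrt{4-k^2+k^4}$ and $5+5k^2-2\sqrt{4-7k^2+4k^4}$, and invoke the Hill's-equation interlacing (Magnus--Winkler) to place them strictly between $\lambda_0$ and $\lambda_1=0$, which is exactly the chain $\lambda_0<\rho_0<\rho_1<\lambda_1=0$ the paper cites from Angulo--Linares. The only (minor) difference is that you verify the zero counts and the strict inequality $\mu_1<\mu_2$ explicitly and deduce the orthogonality $\chi_i\perp\phi$ from $L$-anti-periodicity of $\chi_i$ rather than from the ``explicit forms,'' which is a clean way of filling in the details the paper leaves to the references.
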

\begin{proof}
The proof follows the same steps as in Lemma $\ref{lemmaAng}$ and a
more detailed proof can be found in \cite[Theorem
4.2]{angulo-linares}. It is easy to see that
$$
\widetilde{\delta}_0=5+2k^2-2\sqrt{4-k^2+k^4}, \quad
\widetilde{\delta}_1=5+5k^2-2\sqrt{4-7k^2+4k^4}
$$
are the first two eigenvalues for the semi-periodic eigenvalue
problem associated with the Lam\'e equation in \eqref{lame1}. The
eigenfunctions are given, respectively, by
$$
\widetilde{\Psi}_0(x)=cn(x)[1-(2+k^2-\sqrt{4-k^2+k^4})sn^2(x)]
$$
$$
\widetilde{\Psi}_1(x)=sn(x)[3-(2+2k^2-\sqrt{4-7k^2+4k^4})sn^2(x)]
$$
Hence, if $\rho_0$ and $\rho_1$ are the first two eigenvalues for
the semi-periodic eigenvalue problem associated with operator
$\mathcal{L}_{1,cn}$, we obtain from \eqref{delta} (with $\delta$
and $\lambda$ replaced, respectively, with $\widetilde{\delta}$ and
$\rho$) that (see e.g. \cite[Theorem 2.1]{MW})
$$
\lambda_0<\rho_0<\rho_1<\lambda_1=0,
$$
where $\lambda_0$ and $\lambda_1$ are given in Lemma \ref{lemmaAng}.
The orthogonality condition follows from the explicit forms of
$\chi_1$ and $\chi_2$. This completes the proof of the lemma.
 \end{proof}

\subsection{Orbital stability.}   \label{subsection2.3}

First of all, let us make clear our notion of orbital stability.
Since system \eqref{KG-SCH1} has phase and translation symmetries,
that is, if $(u(x,t), v(x,t))$ is a solution of \eqref{KG-SCH1}, so
are
\begin{equation}\label{trans}
(u(x+x_0,t), v(x+x_0,t))
\end{equation}
and
\begin{equation}\label{phase}
(e^{is}u(x,t), v(x,t))=:P_s(u,v)(x,t),
\end{equation}
for any $x_0,s \in \mathbb{R}$, our definition of orbital stability
in this subsection is as follows:

\begin{definition}    \label{orbitalstability}
A standing wave solutions for \eqref{KG-SCH1},
$(e^{ict}\psi_c(x),\phi_c(x))$, is said to be orbitally stable in
$X= H_{per}^1([0,L])\times H_{per}^1([0,L])\times L_{per}^2([0,L])$
if for any $\varepsilon>0$ there exists $\delta>0$ such that if
$(u_0,v_0,v_1)\in X$ satisfies
$||(u_0,v_0,v_1)-(\psi_c,\phi_c,0)||_X<\delta$, then the solution
$\overrightarrow{u}(t)=(u,v,v_t)$ of \eqref{KG-SCH1} with
$\overrightarrow{u}(0)=(u_0,v_0,v_1)$ exists globally and satisfies
$$
\displaystyle\sup_{t\geq0}\inf_{ s,y\in\mathbb{R}}
||\overrightarrow{u}(t)
-(e^{is}T_y\psi_c(\cdot),T_y\phi_c(\cdot),0)||_{X}<\varepsilon.
$$
Otherwise, $(e^{ict}\psi_c(x),\phi_c(x))$ is said to be orbitally
unstable in $X$.
\end{definition}

Here, $T_yg(x)=g(x+y)$. Our stability result reads as follows:

\begin{theorem}   \label{teoestKGSCH}
 Let $\phi_{c}$ be the corresponding cnoidal wave obtained in
 Corollary \ref{corcurve}. Then the periodic wave solution
$(\sqrt{2}e^{ict}\phi_c,\phi_c)$ is orbitally stable in $X$ by the
periodic flow of system \eqref{KG-SCH1}.
\end{theorem}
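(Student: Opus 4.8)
The plan is to apply the abstract orbital stability theory of Grillakis, Shatah and Strauss, as adapted to the periodic setting. Write $\Phi=(\sqrt2\phi_c,\phi_c,0,0)$ for the standing-wave profile, so that $\Phi$ is a critical point of $\mathcal{E}+c\mathcal{F}$; the linearized operator is exactly $\mathcal{L}_{cn}=\mathcal{E}''(\Phi)+c\mathcal{F}''(\Phi)$ with its block-diagonal structure \eqref{Lcn}. The orbit of $\Phi$ under the two-parameter group generated by phase rotation $P_s$ and translation $T_y$ is two-dimensional; the abstract theorem then guarantees orbital stability provided (a) the Cauchy problem for \eqref{KG-SCH1} is globally well-posed in the energy space $X$ (this was noted in the introduction via Kato's theory and \cite{rabsztyn}), (b) $\mathcal{L}_{cn}$ has exactly one negative eigenvalue and its kernel is spanned precisely by the infinitesimal generators of the symmetry group acting on $\Phi$, and (c) the scalar quantity $d''(c)$ (equivalently the relevant $2\times2$ determinant built from the derivatives of $\mathcal{E}+c\mathcal{F}$ along the branch) has the correct sign, namely $d''(c)>0$.

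First I would assemble the spectral picture. By Theorem \ref{teoeigenRe}, $\mathcal{L}_{\mathcal{R},cn}$ has a single simple negative eigenvalue and a one-dimensional kernel generated by $(2\phi'/3,\sqrt2\phi'/3)$, which is precisely $\partial_y\Phi$ projected onto the real block; $\mathcal{L}_{\mathcal{I},cn}$ is non-negative with a simple zero eigenvalue generated by $(\phi,0)$, which is $\partial_s\Phi$ projected onto the imaginary block. Hence $\mathcal{L}_{cn}$ has exactly one negative eigenvalue and $\ker\mathcal{L}_{cn}$ is exactly two-dimensional, spanned by the two symmetry directions — no extra kernel appears because zero is simple in each block. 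This verifies the spectral hypothesis of the Grillakis--Shatah--Strauss framework.

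The main work — and the main obstacle — is the convexity/determinant condition. One must compute the matrix $D$ whose entries are $\partial^2_{c}[\mathcal{E}(\Phi)+c\mathcal{F}(\Phi)]$ type quantities, or more concretely evaluate $\tfrac{d}{dc}\mathcal{F}(\Phi_c)=\tfrac{d}{dc}\!\int_0^L(\psi_c^2)\,dx=\tfrac{d}{dc}\!\int_0^L 2\phi_c^2\,dx$ and check its sign. Here I would use the explicit formulas from Remark \ref{obsexist} and Corollary \ref{corcurve}: since $\phi_c=\varphi_{\omega(c)}/4$ with $\varphi_\omega$ the cnoidal profile and $\omega=2c$, one has $\int_0^L\phi_c^2\,dx$ expressible through complete elliptic integrals $K(k),E(k)$ with $k=k(c)$ and $dk/dc>0$ (Corollary \ref{corcurve}). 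The required inequality should reduce, after differentiating and using the standard derivative formulas for $K$ and $E$, to a one-variable inequality in $k\in(0,1)$ that can be checked directly or numerically; monotonicity of $k\mapsto c$ then transfers it to a statement about $d/dc$. I expect this elliptic-integral computation to be the delicate point, and I would present it as a separate lemma asserting $\frac{d}{dc}\!\int_0^L\phi_c^2\,dx>0$ (or the appropriate signed determinant condition).

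Finally, I would invoke the abstract stability theorem: global well-posedness in $X$ together with the verified spectral count (one negative eigenvalue, kernel equal to the symmetry directions) and the verified positivity condition yields that $\Phi_c$ is orbitally stable in the sense of Definition \ref{orbitalstability}. One technical remark to include: since the relevant quadratic form $\langle\mathcal{L}_{cn}\cdot,\cdot\rangle$ is positive definite on the subspace orthogonal to the two symmetry directions and to the single negative direction — this positivity being controlled by the sign computed above — a standard Lyapunov argument (constructing the functional $\mathcal{E}+c\mathcal{F}$ restricted near the orbit, using conservation of $\mathcal{E}$ and $\mathcal{F}$ along the flow, and handling the modulation parameters $s,y$) closes the proof in the periodic context exactly as in \cite{grillakis2}, \cite{grillakis3}. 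Because all perturbations here have the same fundamental period $L$ as $\phi_c$, the spectral input needed is precisely that of Theorem \ref{teoeigenRe} and not the twice-period version of Lemma \ref{lemmaAng1}, so no further spectral analysis is required.
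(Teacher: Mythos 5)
Your overall strategy is the same as the paper's: invoke the abstract stability theorem of \cite{grillakis2} in $X$, feed in the spectral data of Theorem \ref{teoeigenRe} (one simple negative eigenvalue of $\mathcal{L}_{cn}$, kernel exactly the two symmetry directions, discrete positive remainder), observe that $(\sqrt2\phi_c,\phi_c,0,0)$ is a critical point of $\mathcal{H}=\mathcal{E}+c\mathcal{F}$ so that $d'(c)=\mathcal{F}(\sqrt2\phi_c,\phi_c,0,0)$, and reduce the whole matter to the sign of $d''(c)$. Two minor remarks: since the wave family depends on the single parameter $c$, the relevant quantity is the scalar $d''(c)$, not a $2\times 2$ determinant; and you are right that only the period-$L$ spectral information is needed here, not Lemma \ref{lemmaAng1}.

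The genuine gap is that you never actually prove the crux, $d''(c)=\frac{d}{dc}\int_0^L\phi_c^2\,dx>0$ (up to the harmless factor $2$): you only assert that the expression "should reduce" to a one-variable inequality in $k$ that "can be checked directly or numerically," and you defer it to a hypothetical lemma. A numerical check is not a proof, and the head-on computation you sketch is not routine: expanding $\int_0^L\varphi_\omega^2\,dx$ from \eqref{cnoidalsol} produces terms $\beta_2^2L+2\beta_2(\beta_3-\beta_2)\int cn^2+(\beta_3-\beta_2)^2\int cn^4$ with all of $\beta_1,\beta_2,\beta_3,k$ depending on $\omega$, and the resulting monotonicity is not self-evident. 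The paper sidesteps this with a simple but essential trick that is missing from your proposal: integrating \eqref{EDOvarphi} over a period gives $\int_0^L\varphi_\omega^2\,dx=2\omega\int_0^L\varphi_\omega\,dx$, so since $\varphi_\omega>0$ it suffices to show that $\Upsilon(\omega)=\int_0^L\varphi_\omega\,dx$ is strictly increasing; Remark \ref{obsexist} then yields the closed form \eqref{intvarphi}, whose $k$-dependent factor $f(k)=K^2[\sqrt{1-k^2+k^4}+k^2-2]+3KE$ is strictly increasing on $(0,1)$, and $dk/d\omega>0$ (equivalently $dk/dc>0$ from Corollary \ref{corcurve}) concludes $d''(c)>0$. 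Until you supply either this reduction or an honest monotonicity proof of your own elliptic-integral expression, the convexity hypothesis of \cite{grillakis2} --- and hence the theorem --- remains unverified.
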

\begin{proof}
We apply the abstract {\it Stability Theorem} in \cite{grillakis2}.
From Theorem $\ref{teoeigenRe}$ we see that operator
$\mathcal{L}_{cn}$ has the spectral properties required in Grillakis
\textit{et al.} \cite{grillakis2} to apply the abstract theorem.
Indeed,  Theorem $\ref{teoeigenRe}$ implies that operator
$\mathcal{L}_{cn}$ has only one negative eigenvalue which is simple
and its kernel is 2-dimensional. Moreover, the remainder of the its
spectrum consists on a discrete and positive set of eigenvalues.

Also, it is easy to see that our smooth curve of periodic solutions
given in Corollary \ref{corcurve} is a family of critical points for
the functional $\mathcal{H}=\mathcal{E}+c\mathcal{F}$, that is,
\begin{equation}   \label{pointcri}
\mathcal{H}'(\sqrt{2}\phi_{c}, \phi_{c},0,0)=0.
\end{equation}

Finally, we need to study the convexity of the real function
\begin{equation}
d(c)=\mathcal{E}(\sqrt{2}\phi_{c},
\phi_{c},0,0)+c\mathcal{F}(\sqrt{2}\phi_{c},\phi_{c},0,0).
\label{functiond1}
\end{equation}
From \eqref{pointcri} we obtain that
$d'(c)=\mathcal{F}(\sqrt{2}\phi_{c},\phi_{c},0,0)$. Hence,  from
Theorem $\ref{teocurve}$,
\begin{equation}\begin{array}{lll}  \label{productofdc}
d''(c)&=&\displaystyle\frac{d}{dc}\left(\int_{0}^{L}\phi_{c}^2(x)dx\right)\\\\
&=&\displaystyle\frac{1}{8}\frac{d}{d\omega}\left(\int_{0}^{L}\varphi_{\omega}^2(x)dx\right).\\\\
\end{array}
\end{equation}

In order to finish the proof, we just need to show  that $d''(c)>0$.
To show this, we  integrate equation $(\ref{EDOvarphi})$ over
$[0,L]$ to conclude that
$$
\displaystyle\int_{0}^{L}\varphi_{\omega}^2(x)dx=2\omega\int_{0}^{L}\varphi_{\omega}(x)dx.
$$
However, to verify that $d''(c)>0$ it is sufficient  to prove that
$\Upsilon(\omega)=\int_{0}^{L}\varphi_{\omega}(x)dx$ is a strictly
increasing function. In fact, from Byrd and Friedman's book
\cite{byrd} we deduce from the fact
$\beta_3-\beta_2=\displaystyle48k^2K^2/L^2$, that
\begin{equation}
\displaystyle\Upsilon(\omega)=\frac{16}{L}\omega\left(K(k)^2[\sqrt{1-k^2+k^4}+1-2k^2]
+3K(k)[E(k)-k'^2K(k)]\right). \label{intvarphi}\end{equation}

Since
\begin{equation}\label{dk}\begin{array}{llll}
\displaystyle f(k)&=&\displaystyle K(k)^2[\sqrt{1-k^2+k^4}+1-2k^2]
+3K(k)[E(k)-k'^2K(k)]\\\\
&=&K(k)^2[\sqrt{1-k^2+k^4}+k^2-2]
+3K(k)E(k),\\\\
\end{array}
\end{equation}
function $f:(0,1)\rightarrow \mathbb{R}$ defined in $(\ref{dk})$ is
a strictly increasing function with respect to the modulus.
Moreover, from Remark $\ref{obsexist}$ we have $\displaystyle
dk/d\omega>0$. Thus, we conclude that $\Upsilon(\omega)$ is a
strictly increasing function and therefore $d''(c)>0$.
\end{proof}

\begin{remark}\label{obs1}
By using the same framework as presented in this section (with the
necessary modifications), one shows the stability of the solitary
standing wave solution $(\sqrt{2}e^{ict}\phi_c(x),\phi_c(x))$,
$x\in\mathbb{R}$, $c>0$, for equation $(\ref{KGSCH})$ with
$f(s,t)=s$, where
$$\phi_c(x)=\displaystyle 6c\, sech^2\left(\sqrt{\frac{c}{2}}x\right),$$
is the solitary wave given in Remark \ref{remark2.1}.
\end{remark}

\subsection{Orbital instability.} \label{subsection2.4}

Let $\phi=\phi_c$ be given in Corollary \ref{corcurve}. This section
is devoted to prove that the corresponding standing wave is unstable
in the space $H_{per}^1([0,2L])\times H_{per}^1([0,2L])\times
L_{per}^2([0,2L])$, that is, the standing waves are unstable with
respect to periodic perturbations having twice the fundamental
period as the corresponding wave. The ideas to obtain such result is
to get orbital instability from the linear instability of the zero
solution for the linearization of \eqref{KG-SCH1} around the orbit
generated modulus phase, $\{P_{ct}(\sqrt2\phi,\phi,0,0);\,\, t \in
\mathbb{R}\}$, where $P_s$ is the transformation defined in
\eqref{phase}. Note that $P_s$ (acting on real-valued functions) can
be described as
\begin{equation}    \label{matrixpahse}
\displaystyle P_s\left(
\begin{array}{c}
u_1\\
v_1\\
u_2\\
v_2
\end{array}\right)
=\left(
\begin{array}{cccc}
\cos s & 0 & -\sin s & 0\\
0 & 1 & 0 & 0\\
\sin s & 0 & \cos s & 0\\
0 & 0 & 0 & 1
\end{array}\right)
\left(
\begin{array}{c}
u_1\\
v_1\\
u_2\\
v_2
\end{array}\right).
\end{equation}

The clever reader has already noted that our notion of stability
here is slightly different from that one in Subsection 2.3. In fact,
our definition is only modulo phase.

\begin{definition} \label{definition6.1}
Let $Y=H_{per}^1([0,2L])\times H_{per}^1([0,2L]) \times
L_{per}^2([0,2L])$. The orbit generated modulus phase,
$\{P_{sc}(\psi_c,\phi_c,0); s \in {\mathbb{R}} \}$, is said to be
orbitally stable in $Y$, if for every $\varepsilon>0$ there exists a
$\delta>0$ such that if $z_0 \in Y$ and $\|z_0-(\psi_c, \phi_c,0)
\|_{Y}< \delta$, then the solution $z(t)$ of \eqref{KG-SCH1} with
$z(0)=z_0$ exists for all $t$ and
$$
{\displaystyle \sup_{t\geq 0}} \inf_{s \in {\mathbb{R}}} \|
z(t)-P_s(\psi_c,\phi_c,0) \|_{Y} < \varepsilon.
$$
Otherwise, the orbit is said to be orbitally unstable in $Y$.
\end{definition}

First of all we note that linearizing system \eqref{KG-SCH1} around
the orbit $$\Omega=\{ P_{cs}(\sqrt2\phi,\phi,0,0), \,\, s \in
\mathbb{R} \},$$ we get the equation
\begin{equation}\label{111}
\frac{dV}{dt}=J\mathcal{L}_{cn}V+O(\|V\|^2),
\end{equation}
where $V(t)=P_{(-ct)}U(t)-(\sqrt2\phi,\phi,0,0)$, $J$ and
$\mathcal{L}_{cn}$ are the operators defined in \eqref{matrixJ} and
\eqref{Lcn}, respectively.

In order to prove that  \eqref{111} has the zero solution as an
unstable solution we know that it is sufficient to prove that
$J\mathcal{L}_{cn}$ has  finitely many eigenvalues with strictly
positive real part. Moreover, this implies that orbit $\Omega$ is
orbitally unstable (see \cite{grillakis2}, \cite{grillakis}).
Actually, we prove:

\begin{theorem}  \label{theoreminst}
Let  $\phi=\phi_{c}$ be the cnoidal wave given by Corollary
$\ref{corcurve}$. Then the orbit
$$
\Omega=\{ P_{cs}(\sqrt2\phi,\phi,0,0), \,\, s \in \mathbb{R} \}
$$
is orbitally unstable in the space $Y=H_{per}^1([0,2L])\times
H_{per}^1([0,2L]) \times L_{per}^2([0,2L])$.
\end{theorem}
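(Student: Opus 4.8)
The plan is to invoke the instability criterion of Grillakis (\cite{grillakis}, \cite{grillakis2}): the orbit $\Omega$ is orbitally unstable in $Y$ provided the linearized operator $J\mathcal{L}_{cn}$, acting on the space of $2L$-periodic functions, admits at least one (hence finitely many) eigenvalue with strictly positive real part. So the whole proof reduces to producing such an eigenvalue. Because of the block structure $\mathcal{L}_{cn}=\mathrm{diag}(\mathcal{L}_{\mathcal{R},cn},\mathcal{L}_{\mathcal{I},cn})$ and the corresponding block structure of $J$, the linear instability will come from the ``real part'' block, and, after the similarity transformation $A_{\mathcal{R}}$ that diagonalizes $\mathcal{L}_{\mathcal{R},cn}$ into $\mathrm{diag}(\mathcal{L}_{1,cn},\mathcal{L}_{3,cn})$, the effective operator governing instability is built from $\mathcal{L}_{1,cn}$ alone (since $\mathcal{L}_{3,cn}>0$ plays no destabilizing role, and $\mathcal{L}_{\mathcal{I},cn}\geq 0$ likewise).

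The key input is Lemma \ref{lemmaAng1}: on $L^2_{per}([0,2L])$ the operator $\mathcal{L}_{1,cn}$ has its first four eigenvalues simple, with $0$ being the \emph{fourth} one (eigenfunction $\phi'$), and the eigenfunctions $\chi_1,\chi_2$ of the second and third eigenvalues satisfy $\chi_i\perp\phi$. Thus, on the $2L$-periodic space, $\mathcal{L}_{1,cn}$ has \emph{three} negative eigenvalues instead of one; equivalently $n(\mathcal{L}_{cn})=3$ on $Y$. First I would set up the count: compute the Morse index $n=n(\mathcal{L}_{cn})$ of $\mathcal{L}_{cn}$ restricted to $2L$-periodic functions, getting $n=3$ from Lemma \ref{lemmaAng1} together with Theorem \ref{teoeigenRe}(ii) and the positivity of $\mathcal{L}_{3,cn}$. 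Then I would compute the dimension of the kernel and, crucially, evaluate the sign of the quadratic form of $\mathcal{L}_{cn}$ on the generator of the symmetry directions — i.e. the sign of $d''(c)$ (equivalently the sign of $\langle \mathcal{L}_{cn}^{-1}\mathcal{F}'(\Phi),\mathcal{F}'(\Phi)\rangle$ after projecting out the kernel). From the proof of Theorem \ref{teoestKGSCH} we already know $d''(c)>0$. The abstract theorem of Grillakis then says: if $n(\mathcal{L}_{cn})-p(d'')$ is odd (here $p(d'')=0$ since $d''>0$, so $n - p = 3$ is odd), the linearized flow $J\mathcal{L}_{cn}$ has an odd number — in particular at least one — of real positive eigenvalues, hence linear instability, hence orbital instability.

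Concretely the steps, in order, are: (1) recall the reduction of the instability problem to counting real positive eigenvalues of $J\mathcal{L}_{cn}$ on $Y$, citing \cite{grillakis,grillakis2}; (2) use the block-diagonal structure and the similarity transformation $A_{\mathcal{R}}$ to replace $\mathcal{L}_{\mathcal{R},cn}$ by $\mathrm{diag}(\mathcal{L}_{1,cn},\mathcal{L}_{3,cn})$, noting $\mathcal{L}_{3,cn}>0$ and $\mathcal{L}_{\mathcal{I},cn}\geq0$, so that the only source of negative directions is $\mathcal{L}_{1,cn}$; (3) apply Lemma \ref{lemmaAng1} to conclude that on $2L$-periodic functions $\mathcal{L}_{1,cn}$ has exactly three negative eigenvalues and a simple kernel spanned by $\phi'$, whence $n(\mathcal{L}_{cn})=3$; (4) verify the nondegeneracy/transversality hypotheses of Grillakis' theorem — the kernel of $\mathcal{L}_{cn}$ on $Y$ is spanned exactly by the two symmetry-generated directions $\Phi'=(\sqrt2\phi',\phi',0,0)$ and $J\Phi$ coming from phase (using the orthogonality $\chi_i\perp\phi$ to guarantee the relevant solvability/projection conditions), and check $d''(c)\neq 0$; (5) compute $p(d'')=0$ from $d''(c)>0$, established in the proof of Theorem \ref{teoestKGSCH}; (6) conclude that $n(\mathcal{L}_{cn}) - p(d'') = 3$ is odd, so $J\mathcal{L}_{cn}$ has at least one real eigenvalue with positive real part, giving linear instability of the zero solution of \eqref{111}, and therefore orbital instability of $\Omega$ in $Y$.

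The main obstacle I anticipate is step (4), verifying that the abstract hypotheses of Grillakis' instability theorem are genuinely met in this periodic, two-symmetry setting: one must check that passing from the $L$-periodic to the $2L$-periodic space does not introduce extra kernel directions for $\mathcal{L}_{cn}$ beyond the two symmetry directions, and that the symplectic/orthogonality structure (the role of $J$ being only skew-symmetric and non-invertible on the $v$-components, and the fact that the phase direction $J\Phi$ must lie in the appropriate range) is compatible with the abstract framework. This is where the orthogonality conclusion $\chi_i\perp\phi$ from Lemma \ref{lemmaAng1} is used: it ensures the negative eigendirections of $\mathcal{L}_{1,cn}$ on $[0,2L]$ do not interact with the constraint coming from $\mathcal{F}$, so the index computation $n=3$ is ``clean.'' Everything else — the spectral facts about $\mathcal{L}_{1,cn}$, $\mathcal{L}_{2,cn}$, $\mathcal{L}_{3,cn}$, and the sign of $d''$ — is already available from Lemmas \ref{lemmaAng}, \ref{lemmaAng1} and the proof of Theorem \ref{teoestKGSCH}.
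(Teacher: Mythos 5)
There is a genuine gap, and it sits exactly at steps (5)--(6), not at step (4) where you anticipated trouble. In the Grillakis--Shatah--Strauss convention, $p(d'')$ is the number of \emph{positive} eigenvalues of the Hessian of $d$; since $d''(c)>0$ (and this persists on $[0,2L]$, where the integral merely doubles), you have $p(d'')=1$, not $0$. This is the same convention the paper itself uses in the stability proof of Theorem \ref{teoestKGSCH}, where $n(\mathcal{L}_{cn})=1=p(d'')$ yields stability; with your convention ($p=0$ when $d''>0$) that stability argument would not close. Consequently, on the $2L$-periodic space the correct count is $n(\mathcal{L}_{cn})-p(d'')=3-1=2$, which is \emph{even}, and the GSS parity criterion ``$n-p$ odd $\Rightarrow$ instability'' is simply inconclusive here. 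So the route you propose does not produce the required eigenvalue of $J\mathcal{L}_{cn}$ with positive real part, and no fix within the odd-parity framework is available with the data at hand.

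This is precisely why the paper takes a different route: it invokes Theorem 2.6 of Grillakis \cite{grillakis3}, which gives a sharper count adapted to the block structure $\mathcal{L}_{cn}=\mathrm{diag}(\mathcal{L}_{\mathcal{R},cn},\mathcal{L}_{\mathcal{I},cn})$. One restricts $\mathcal{L}_{\mathcal{R},cn}$ and $\mathcal{L}_{\mathcal{I},cn}^{-1}$ to $\mathcal{Z}=[\mathrm{Ker}(\mathcal{L}_{\mathcal{R},cn})\cup \mathrm{Ker}(\mathcal{L}_{\mathcal{I},cn})]^{\bot}$ and counts
$\max\{ n(\widehat{\mathcal{L}}_{\mathcal{R},cn}), n(\widehat{\mathcal{L}}_{\mathcal{I},cn}^{-1})\}-d\bigl(C(\widehat{\mathcal{L}}_{\mathcal{R},cn})\cap C(\widehat{\mathcal{L}}_{\mathcal{I},cn}^{-1})\bigr)$
pairs of real eigenvalues of $J\mathcal{L}_{cn}$. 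Since $\mathcal{L}_{\mathcal{I},cn}$ is nonnegative with kernel spanned by $(\phi,0)$ (Theorem \ref{teoeigenRe}), its inverse is positive on $\mathcal{Z}$, so the negative cone intersection is empty and the count reduces to $n(\widehat{\mathcal{L}}_{\mathcal{R},cn})$. Here is where the orthogonality $\chi_1,\chi_2\perp\phi$ of Lemma \ref{lemmaAng1} does its real work (different from the role you assigned it): the eigenvectors built from $\chi_1,\chi_2$ remain in $\mathcal{Z}$, while the ground-state direction $\chi_0$ does not (as $\langle\chi_0,\phi\rangle>0$), so $n(\widehat{\mathcal{L}}_{\mathcal{R},cn})=2>0$. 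This yields two pairs of real eigenvalues of $J\mathcal{L}_{cn}$, in particular one with strictly positive real part, and then Lemma 5.6 and Theorem 5.8 of \cite{grillakis2} convert this linear instability into orbital instability of $\Omega$ in $Y$. Your spectral inputs (Lemma \ref{lemmaAng1}, positivity of $\mathcal{L}_{3,cn}$ and $\mathcal{L}_{\mathcal{I},cn}$, the two-dimensional kernel) are all correct and are exactly what this finer criterion needs; the flaw is only in which abstract instability theorem you fed them into.
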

\begin{proof}
From Lemma 5.6 and Theorem 5.8 in \cite{grillakis2}, we know that
$J\mathcal{L}_{cn}$ has finitely many eigenvalues with strictly
positive real part. Hence we only have to prove that
$J\mathcal{L}_{cn}$ has at least one eigenvalue with strictly
positive real part. To prove this, we use the approach introduced by
Grillakis \cite{grillakis3}. We start by defining
$$
\mathcal{Z}= [Ker(\mathcal{L}_{\mathcal{R},cn})\cup
Ker(\mathcal{L}_{\mathcal{I},cn})]^{\bot},
$$
$$
\widehat{\mathcal{L}}_{\mathcal{R},cn}= \mbox{restriction of}\,\,
\mathcal{L}_{\mathcal{R},cn}\,\, \mbox{on} \,\, \mathcal{Z} \cap
H_{per}^2([0,2L]) ,
$$
$$
\widehat{\mathcal{L}}_{\mathcal{I},cn}^{-1}= \mbox{restriction
of}\,\, \mathcal{L}_{\mathcal{I},cn}^{-1}\,\, \mbox{on } \,\,
\mathcal{Z} \cap H_{per}^2([0,2L]).
$$
With this definitions, Theorem 2.6 in \cite{grillakis3} states that
$J\mathcal{L}_{cn}$ has exactly
\begin{equation}   \label{112}
\max\{ n(\widehat{\mathcal{L}}_{\mathcal{R},cn}),
n(\widehat{\mathcal{L}}_{\mathcal{I},cn}^{-1}) \} -
d(C(\widehat{\mathcal{L}}_{\mathcal{R},cn}) \cap
C(\widehat{\mathcal{L}}_{\mathcal{I},cn}^{-1}))
\end{equation}
$\pm$ pairs of real eigenvalues, where $C(\mathcal{L})=\{ z \in
\mathcal{Z}; \,\, \langle \mathcal{L}z, z\rangle_{L_{per}^2} <0 \}$
denotes the negative cone of operator $\mathcal{L}$ and $
d(C(\mathcal{L}))$ denotes the dimension of a maximal linear
subspace that is contained in $C(\mathcal{L})$.

Hence we just need to prove that the number in \eqref{112} is
strictly positive. We first observe that since zero is the first
eigenvalue of $\mathcal{L}_{\mathcal{I},cn}$ (see Theorem
\ref{teoeigenRe}) it follows that such operator is a positive
operator on $\mathcal{Z}$ and so
$C(\widehat{\mathcal{L}}_{\mathcal{I},cn}^{-1}) =\emptyset$ and
$n(\widehat{\mathcal{L}}_{\mathcal{I},cn}^{-1})=0$. Thus, the number
in \eqref{112} reduces to
$n(\widehat{\mathcal{L}}_{\mathcal{R},cn})$.

Let us prove that $n(\widehat{\mathcal{L}}_{\mathcal{R},cn})=2$.
Indeed, from the definition of $\mathcal{L}_{\mathcal{R},cn}$ and
Lemma \ref{lemmaAng1} we see that
$n(\widehat{\mathcal{L}}_{\mathcal{R},cn})\leq
n(\mathcal{L}_{\mathcal{R},cn})\leq 3$. Now let $\lambda_0,
\lambda_1$ and $\lambda_2$ be the three negative eigenvalues of
operator $\mathcal{L}_{1,cn}$ given by Lemma \ref{lemmaAng1} with
respective eigenfunctions $\chi_0, \chi_1$ and $\chi_2$. Thus,
$$
\overrightarrow{\chi}_0=(2\chi_0/3, \sqrt2 \chi_0/3),  \qquad
\overrightarrow{\chi}_1=(2\chi_1/3, \sqrt2 \chi_1/3)  \quad
\mbox{and} \quad \overrightarrow{\chi}_2=(2\chi_2/3, \sqrt2
\chi_2/3)
$$
are eigenfunctions of $\mathcal{L}_{\mathcal{R},cn}$ associated to
eigenvalues $\lambda_0, \lambda_1$ and $\lambda_2$, respectively.
Since $Ker(\mathcal{L}_{\mathcal{I},cn})$ is generated by
$\overrightarrow{\Phi}=(\phi,0)$ (see Theorem \ref{teoeigenRe}), we
obtain from Lemma \ref{lemmaAng1} that
$$
\langle \overrightarrow{\chi}_j, \overrightarrow{\Phi}
\rangle_{L_{per}^2\times L_{per}^2} = \frac{2}{3} \langle \chi_j,
\phi \rangle_{L_{per}^2} = 0, \qquad j=1,2.
$$
Moreover, $\langle \overrightarrow{\chi}_0, \overrightarrow{\Phi}
\rangle_{L_{per}^2\times L_{per}^2} = 2/3 \langle \chi_0, \phi
\rangle_{L_{per}^2} > 0$ since $\chi_0$ and $\phi$ are strictly
positive functions. This completes the proof of the Theorem.
\end{proof}

\section{Orbital stability of dnoidal wave solutions for system
$(\ref{KGSCH})$.}  \label{dnoidalsect}

The purpose of this section is to establish the
stability/instability of dnoidal wave solutions for system
$(\ref{KGSCH})$ when $f(s,t)=st$, namely,
\begin{equation}  \label{KG-SCH2}
\left\{\begin{array}{lll}
iu_t+\displaystyle\frac{1}{2}u_{xx}=-v^2u\\\\
v_{tt}-v_{xx}+m^2v=2|u|^2v.
\end{array}\right.
\end{equation}
Here we look for solutions of the form
$$
u(x,t)=e^{ict}\phi_c(x), \qquad v(x,t)=\phi_c(x)
$$
where $\phi_c$ is a smooth $L-$periodic real-valued function and
$c>0$ is the wave-speed. Substituting this form in $(\ref{KG-SCH2})$
we see, after integration, that $\phi_c$ must satisfy
\begin{equation}\label{quadra2}
[\phi_c']^2=-\phi_c^4+2c\phi_c^2+2B_{\phi_c},
\end{equation}
for some real constant $B_{\phi_c}$.  A smooth positive  periodic
solution for \eqref{quadra2} is given by (see e.g., \cite{angulo1}
or \cite{byrd})
\begin{equation}\label{dnsol}
\phi_c(x)=\eta\, dn(\eta x;k), \qquad k\in(0,1),
\end{equation}
where
\begin{equation}  \label{vel}
\displaystyle \eta=\displaystyle\frac{2K}{L} \qquad  \mbox{and}
\qquad c=\frac{2K^2}{L^2}(2-k^2).
\end{equation}

\begin{remark}\label{obs2}
The solitary standing wave solution related to equation
$(\ref{quadra2})$ with $B_{\phi_c}\equiv0$ can be obtained from
equation $(\ref{dnsol})$, namely,
\begin{equation}\label{sechsol}
\phi_c(x)=\sqrt{c}\, sech(\sqrt{c}x),\ \ \ \ \ x\in\mathbb{R}.
\end{equation}
\end{remark}

 If we consider the same steps as that ones for proving Corollary
 $\ref{corcurve}$, for each $L>0$ we can construct  a smooth branch
 (depending on $c$) of dnoidal waves having fundamental period $L$.
 The next proposition summarizes these results.

 \begin{proposition}  \label{propcurve}
Let $L>0$ fixed  and
$c\in\displaystyle\left(\pi^2/L^2,+\infty\right)$. Then the dnoidal
wave $\phi_c$ given by \eqref{dnsol} has fundamental period $L$ and
satisfies \eqref{quadra2}. Moreover, the mapping
$$
c\in\displaystyle\left(\pi^2/L^2,+\infty\right)\rightarrow\phi_{c}\in
H_{per}^n([0,L])
$$
is a smooth function and the modulus $k=k(c)$ satisfies $dk/dc>0$.
 \end{proposition}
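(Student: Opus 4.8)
The plan is to mimic the construction behind Corollary~\ref{corcurve} and Theorem~\ref{teocurve}, reducing everything to the Implicit Function Theorem applied to the single scalar relation \eqref{vel} between $c$ and the modulus $k$. First I would check that \eqref{dnsol}--\eqref{vel} is indeed forced. Since $dn(\cdot;k)$ has fundamental period $2K(k)$, the function $\phi_c(x)=\eta\,dn(\eta x;k)$ has fundamental period $2K(k)/\eta$, so requiring fundamental period $L$ gives $\eta=2K(k)/L$. Inserting $\phi_c=\eta\,dn(\eta x;k)$ into \eqref{quadra2} and using the identities $\tfrac{d}{dy}dn(y;k)=-k^2\,sn(y;k)\,cn(y;k)$, $sn^2+cn^2=1$ and $dn^2+k^2sn^2=1$ to eliminate $sn$ and $cn$ in favor of $dn^2$, a short computation shows that \eqref{quadra2} holds exactly when $c=\tfrac{\eta^2}{2}(2-k^2)$ (with $B_{\phi_c}=-\tfrac{\eta^4}{2}(1-k^2)$ then determined); together with $\eta=2K/L$ this is precisely \eqref{vel}.

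Next I would set up the implicit function argument. Put $G(k)=\dfrac{2K(k)^2}{L^2}(2-k^2)$ for $k\in(0,1)$, so that \eqref{vel} reads $c=G(k)$ and $G$ is real-analytic on $(0,1)$. Using $\tfrac{dK}{dk}=\dfrac{E(k)-(1-k^2)K(k)}{k(1-k^2)}$ one obtains
\[
G'(k)=\frac{4K(k)}{L^2\,k(1-k^2)}\Big[(2-k^2)E(k)-2(1-k^2)K(k)\Big],
\]
so $G'>0$ on $(0,1)$ once we know $g(k):=(2-k^2)E(k)-2(1-k^2)K(k)>0$ there. Since $g(0)=0$ and, using in addition $\tfrac{dE}{dk}=\dfrac{E(k)-K(k)}{k}$, a short computation gives $g'(k)=3k\big(K(k)-E(k)\big)$, which is positive on $(0,1)$ because $K>E$ there, we get $g>0$ and hence $G'>0$ on $(0,1)$.

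Therefore $G$ is a smooth, strictly increasing function on $(0,1)$; since $K(0^+)=\pi/2$ gives $G(0^+)=\pi^2/L^2$ and $K(1^-)=+\infty$ gives $G(1^-)=+\infty$, $G$ is a diffeomorphism of $(0,1)$ onto $(\pi^2/L^2,+\infty)$. Its inverse $c\mapsto k(c)$ is then smooth on $(\pi^2/L^2,+\infty)$ with $dk/dc=1/G'(k(c))>0$, which yields the parameter interval and the assertion $dk/dc>0$; moreover $\eta(c)=2K(k(c))/L$ is smooth, and since $(y,k)\mapsto dn(y;k)$ is jointly smooth for $k\in(0,1)$, the map $c\mapsto\big(x\mapsto\eta(c)\,dn(\eta(c)x;k(c))\big)$ is smooth from $(\pi^2/L^2,+\infty)$ into $H^n_{per}([0,L])$ for every $n$, while the normalization $\eta=2K/L$ forces fundamental period $L$; this settles the remaining claims (cf.\ \cite{angulo-linares}). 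The only step requiring real work --- the \emph{main obstacle} --- is the positivity of $G'$, equivalently the elliptic-integral inequality $(2-k^2)E(k)>2(1-k^2)K(k)$ on $(0,1)$, which is exactly the statement $dk/dc>0$; everything else is a direct substitution with Jacobi elliptic identities or a routine application of the Implicit Function Theorem.
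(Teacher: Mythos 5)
Your proof is correct and follows essentially the same route as the paper: the paper builds the branch ``by the same steps as Corollary~\ref{corcurve}'' and observes in the subsequent remark that $dk/dc>0$ follows from \eqref{vel} and the Inverse Function Theorem, which is exactly your inversion of the scalar relation $c=G(k)=\tfrac{2K(k)^2}{L^2}(2-k^2)$. You simply make explicit the details the paper leaves implicit, namely the verification of \eqref{vel} from \eqref{quadra2} and the monotonicity inequality $(2-k^2)E(k)>2(1-k^2)K(k)$ on $(0,1)$.
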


\begin{remark}
The fact that $dk/dc>0$ follows immediately from \eqref{vel} and the
Inverse Function Theorem.
\end{remark}

\subsection{Spectral analysis.}

Let $\phi=\phi_c$ be the dnoidal wave given by Proposition
\ref{propcurve}. As in Subsection 2.2, we first note that system
\eqref{KG-SCH2} can be write as an infinite-dimensional Hamiltonian
system, namely,
$$
\frac{dU}{dt}=J\mathcal{E}_1'(U(t)),
$$
where  $U=(u_1,v_1,u_2,v_2)=(\mbox{Re}(u),v,\mbox{Im}(u),v_t)$, $J$
is the skew-symmetric  matrix defined in \eqref{matrixJ} and
$\mathcal{E}_1$ is the energy functional
\begin{equation}  \label{conservada1}
\begin{array}{lll}
\mathcal{E}_1(U)=\displaystyle\frac{1}{2}\int_{0}^{L}\left[u_{1,x}^2+u_{2,x}^2+v_2^2+v_{1,x}^2+m^2v_1^2
-2v_1^2(u_1^2+u_2^2)\right]dx.
\end{array}
\end{equation}

We observe that functional $\mathcal{F}$ given in \eqref{functF} is
also a conserved quantity of system \eqref{KG-SCH2}. Hence, the
linearized operator is this case is given by
\begin{equation}\label{operatorprincdn}
\mathcal{L}_{dn}:=
\mathcal{E}_1''(\phi,\phi,0,0)+c\mathcal{F}''(\phi,\phi,0,0)=
\left(\begin{array}{cccc}
\mathcal{L}_{\mathcal{R},dn} & 0\\\\
0 & \mathcal{L}_{\mathcal{I},dn}\end{array}\right),
\end{equation}
where
\begin{equation}  \label{matrixoprealdn1}
\displaystyle \mathcal{L}_{\mathcal{R},dn}=\left(
\begin{array}{cccc}
\displaystyle-\frac{d^2}{dx^2}+2c-2\phi^2 & -4\phi^2\\\\
-4\phi^2 & \displaystyle-\frac{d^2}{dx^2}+2c-2\phi^2
\end{array}\right)
\end{equation}
 and
\begin{equation} \label{matrixopimagdn1}
\displaystyle\mathcal{L}_{\mathcal{I},dn}=\left(
\begin{array}{cccc}
\displaystyle-\frac{d^2}{dx^2}+2c-2\phi^2 & 0\\\\
0 & 1
\end{array}\right).
\end{equation}

As in Section \ref{cnoidalsect}, we see that operator
$\mathcal{L}_{\mathcal{R},dn}$ also can be diagonalized by a
similarity transformation. Indeed, let
$$
B_{\mathcal{R}}=\left(\begin{array}{cccc}
 1 & 1\\\\
-1 & 1
\end{array}\right).
$$
Then the operator
$B_{\mathcal{R}}\mathcal{L}_{\mathcal{R},dn}B_{\mathcal{R}}^{-1}=:
\mathcal{L}_{D\mathcal{R}}$ is a diagonal operator given by
\begin{equation}  \label{matrixopdn}
\displaystyle\mathcal{L}_{D\mathcal{R}}=\left(
\begin{array}{cccc}
\displaystyle \mathcal{L}_{1,dn} & 0\\\\
0 &  \mathcal{L}_{3,dn}
\end{array}\right),
\end{equation}
where
\begin{equation}\label{L1dn}
\mathcal{L}_{1,dn}= -\frac{d^2}{dx^2}+2c-6\phi^2
\end{equation}
and
\begin{equation}\label{L3dn}
\mathcal{L}_{3,dn}= \displaystyle-\frac{d^2}{dx^2}+2c+2\phi^2
\end{equation}

Now we can prove:

\begin{theorem}   \label{teoeigenRedn}
Let $\phi=\phi_{c}$ be the \textit{dnoidal} wave solution given by
Proposition  \ref{propcurve}. Then,
\begin{itemize}
  \item[(i)] operator $\mathcal{L}_{\mathcal{R},dn}$ in \eqref{matrixoprealdn1}
  defined in $L_{per}^2([0,L])\times L_{per}^2([0,L])$ whose domain
    is $H_{per}^2([0,L])\times H_{per}^2([0,L])$ has exactly one negative
    eigenvalue which is simple; zero is a simple eigenvalue.
Moreover, the remainder of the spectrum is constituted by a discrete
set of eigenvalues.

  \item [(ii)] Operator $\mathcal{L}_{\mathcal{I},dn}$ in \eqref{matrixopimagdn1}
  defined in $L_{per}^2([0,L])\times L_{per}^2([0,L])$ whose domain
  is
 $H_{per}^2([0,L])\times L_{per}^2([0,L])$ has only non-negative
eigenvalues being zero the first one which is simple. Moreover, the
remainder of the spectrum is constituted by a discrete set of
eigenvalues.
\end{itemize}
\end{theorem}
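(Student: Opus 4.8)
The plan is to follow the scheme of Section~\ref{cnoidalsect} verbatim: reduce the $2\times2$ matrix operators to scalar Schrödinger operators and then apply Floquet (oscillation) theory. For part (i), I would use the similarity transformation $B_{\mathcal R}$ already exhibited in \eqref{matrixopdn}, which conjugates $\mathcal{L}_{\mathcal{R},dn}$ into the diagonal operator $\mathcal{L}_{D\mathcal R}=\mathrm{diag}(\mathcal{L}_{1,dn},\mathcal{L}_{3,dn})$ with $\mathcal{L}_{1,dn}=-\partial_x^2+2c-6\phi^2$ and $\mathcal{L}_{3,dn}=-\partial_x^2+2c+2\phi^2$. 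Since a similarity transformation preserves the eigenvalues together with their multiplicities, it suffices to understand the spectra of $\mathcal{L}_{1,dn}$, $\mathcal{L}_{3,dn}$, and of the scalar operator $\mathcal{L}_{2,dn}=-\partial_x^2+2c-2\phi^2$ that sits in the first block of $\mathcal{L}_{\mathcal{I},dn}$; by Weyl's theorem each of these (and hence all matrix operators) has purely discrete spectrum, as already noted in the text.

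Two of these are immediate. First, $\phi=\phi_c=\eta\,dn(\eta\cdot;k)>0$, so $\mathcal{L}_{3,dn}\ge 2c>0$: this block contributes no zero or negative modes. Second, differentiating the profile relation coming from \eqref{quadra2} gives $-\phi''+2c\phi-2\phi^3=0$, i.e. $\mathcal{L}_{2,dn}\phi=0$; as $\phi$ has no zeros on $[0,L]$, the Sturm--Liouville oscillation theorem for periodic boundary conditions forces $0$ to be the lowest eigenvalue of $\mathcal{L}_{2,dn}$ and to be simple, whence $\mathcal{L}_{2,dn}\ge 0$ with kernel $\langle\phi\rangle$ and the remaining spectrum a discrete positive set. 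Appending the $1\times1$ block equal to the identity then yields part (ii): $\mathcal{L}_{\mathcal{I},dn}$ has only nonnegative eigenvalues, the lowest being $0$, simple, with eigenfunction $(\phi,0)^{t}$, and the rest of the spectrum discrete and positive.

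The substantive point is the spectrum of $\mathcal{L}_{1,dn}$. Using $\phi^2=\eta^2\bigl(1-k^2sn^2(\eta x;k)\bigr)$ together with $2c=\eta^2(2-k^2)$ from \eqref{vel}, the rescaling $y=\eta x$ transforms the periodic eigenvalue problem $\mathcal{L}_{1,dn}f=\lambda f$ into the Jacobian form of the Lamé equation $\Psi''+\bigl[\lambda/\eta^2+4+k^2-6k^2sn^2(y;k)\bigr]\Psi=0$ on $[0,2K(k)]$ with periodic conditions — exactly the operator governing the stability of dnoidal waves for the cubic NLS, whose spectral picture is classical (see \cite{angulo1}). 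Concretely, the relevant Lamé polynomials are $\Psi_0=1-(1+k^2-\sqrt{1-k^2+k^4})sn^2$, $\Psi_1= sn\,cn$, and $\Psi_2=1-(1+k^2+\sqrt{1-k^2+k^4})sn^2$, with periodic eigenvalues $\rho_0=2+2k^2-2\sqrt{1-k^2+k^4}$, $\rho_1=4+k^2$, $\rho_2=2+2k^2+2\sqrt{1-k^2+k^4}$. Since $\phi'\propto sn(\eta\cdot)cn(\eta\cdot)$, the value $\rho_1$ corresponds precisely to $\lambda=0$, consistent with $\mathcal{L}_{1,dn}\phi'=0$. Now $\Psi_0$ has no zeros on $[0,2K)$, while $\Psi_1$ and $\Psi_2$ each have exactly two; and elementary inequalities on $k\in(0,1)$ give $\rho_0<\rho_1<\rho_2$. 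By the oscillation theorem the only periodic eigenvalues admitting two-zero eigenfunctions are the second and third, so $\{\rho_1,\rho_2\}$ must be exactly that pair, forcing $\rho_1$ to be the second eigenvalue and $\rho_2$ the third. Translating back, $\mathcal{L}_{1,dn}$ has exactly one negative eigenvalue, simple; $0$ is a simple eigenvalue with eigenfunction $\phi'$; the rest of the spectrum is a discrete positive set. Plugging this, together with $\mathcal{L}_{3,dn}>0$, into $\mathcal{L}_{D\mathcal R}=\mathrm{diag}(\mathcal{L}_{1,dn},\mathcal{L}_{3,dn})$ gives $n(\mathcal{L}_{\mathcal{R},dn})=1$ and $\ker\mathcal{L}_{\mathcal{R},dn}=\langle B_{\mathcal R}^{-1}(\phi',0)^{t}\rangle=\langle(\phi',\phi')^{t}\rangle$, which is part (i).

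I expect the main obstacle to be nothing more than the careful bookkeeping in the Lamé step: identifying which Lamé polynomial is proportional to $\phi'$, counting its zeros over one period, and verifying the ordering $\rho_0<\rho_1<\rho_2$, so as to pin $0$ down as the \emph{second} periodic eigenvalue of $\mathcal{L}_{1,dn}$ rather than the third. Once this is in place, the rest of the argument is identical in structure to Lemma~\ref{lemmaAng} and Theorem~\ref{teoeigenRe} for the cnoidal case.
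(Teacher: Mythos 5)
Your proposal is correct and follows essentially the same route as the paper: diagonalize $\mathcal{L}_{\mathcal{R},dn}$ via $B_{\mathcal{R}}$, use $\phi>0$ to get $\mathcal{L}_{3,dn}\ge 2c$ and $\mathcal{L}_{2,dn}\phi=0$ for part (ii), and reduce $\mathcal{L}_{1,dn}$ to the Lamé operator $-\frac{d^2}{dx^2}+6k^2sn^2$ on $[0,2K]$ to locate one simple negative eigenvalue with zero simple (eigenfunction $\phi'$). The only difference is that you spell out the Lamé polynomials, their eigenvalues and the ordering explicitly, details the paper delegates to the remark and to \cite{angulo1}, \cite{Pastor}; your computations there are accurate.
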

\begin{proof}
 The procedure here follows from the same arguments in
Theorem $\ref{teoeigenRe}$ and therefore we will give only the main
steps.

To prove (i), we observe that operator $\mathcal{L}_{1,dn}$ in
\eqref{L1dn}, defined in $L_{per}^2([0,L])$ with domain
$H_{per}^2([0,L])$ has exactly one negative eigenvalue and zero is a
simple eigenvalue with eigenfunction $\phi'$. Moreover, operator
$\mathcal{L}_{3,dn}$ in \eqref{L3dn} (defined in $L_{per}^2([0,L])$
with domain $H_{per}^2([0,L])$) is such that
$\sigma(\mathcal{L}_{3,dn}) \geq 2c$. Hence from \eqref{matrixopdn}
and arguments similar as the ones in Theorem \ref{teoeigenRe} we
prove part (i).

To prove part (ii), we just note that zero is the first eigenvalue
of operator $\mathcal{L}_{2,dn}=-\frac{d^2}{dx^2}+2c-2\phi^2$ with
eigenfunction $\phi$. So the proof is completed.
\end{proof}

\begin{remark}
To show the spectral properties of operator $\mathcal{L}_{1,dn}$
used in the proof of Theorem \ref{teoeigenRedn} one needs to see
that the periodic eigenvalue problem associated to
$\mathcal{L}_{1,dn}$, posed on the interval $[0,L]$, is equivalent
to the periodic eigenvalue problem associated to the Lam\'e operator
\begin{equation}  \label{lameop}
\mathcal{L}_{Lame}=-\frac{d^2}{dx^2}+6k^2sn^2(x;k),
\end{equation}
posed on the interval $[0,2K]$ (see \cite{angulo1}, \cite{Pastor}).
\end{remark}

Next lemma is in the same spirit of  Lemma \ref{lemmaAng1}.

\begin{lemma}  \label{lemma2L}
Let $\phi=\phi_c$ be the cnoidal wave solution  given by Proposition
\ref{propcurve}. Then, linear operator $\mathcal{L}_{1,dn}$ in
\eqref{L1dn} defined in $L^2_{per}([0,2L])$ with domain
$H_{per}^2([0,2L])$ has its first four eigenvalues simple, being the
eigenvalue zero the fourth one with eigenfunction $\phi'$. Moreover,
if $\xi_1$ and $\xi_2$ denote the eigenfunctions associated to the
second and third eigenvalues then $\xi_i \perp \phi$, $i=1,2$.
\end{lemma}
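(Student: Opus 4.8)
The plan is to reduce the $2L$-periodic eigenvalue problem for $\mathcal{L}_{1,dn}=-\tfrac{d^2}{dx^2}+2c-6\phi^2$ to a Lam\'e-type problem on $[0,4K]$, exactly as in the proof of Lemma \ref{lemmaAng1}, and then to locate the first four eigenvalues explicitly. Using $\phi(x)=\eta\,dn(\eta x;k)$ with $\eta=2K/L$, the substitution $\Psi(x)=f(x/\eta)$ transforms the periodic problem $\mathcal{L}_{1,dn}f=\lambda f$ on $[0,2L]$ into the Lam\'e equation $\Psi''+[\rho-6k^2 sn^2(x;k)]\Psi=0$ with periodic boundary conditions on $[0,4K]$, where $\rho=\eta^{-2}(\lambda-2c)+4+2k^2$ (the normalization so that $dn^2 = 1-k^2 sn^2$ produces the $6k^2 sn^2$ coefficient). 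Since the fundamental period of $sn^2(\cdot;k)$ is $2K$, the $[0,4K]$-periodic problem is the \emph{second} lift of the $[0,2K]$-periodic problem; its spectrum is therefore the union of the $2K$-periodic and the $2K$-semiperiodic spectra of the Lam\'e operator.

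Next I would invoke the classical Lam\'e/Ince theory (the same reference \cite{In} and \cite{angulo1},\cite{Pastor} already used): the operator $-\tfrac{d^2}{dx^2}+6k^2 sn^2$ has, among its $2K$-periodic and $2K$-semiperiodic eigenvalues, exactly three that lie below the continuous band, namely $\rho=1+k^2$ (eigenfunction $sn\,cn$, which here corresponds to $\phi'$ and hence to $\lambda=0$, since $\mathcal{L}_{1,dn}\phi'=0$), and $\rho=2+2k^2\mp 2\sqrt{1-k^2+k^4}$ with eigenfunctions $1-(1+k^2\mp\sqrt{1-k^2+k^4})sn^2$. Two of these are $2K$-periodic and one is $2K$-semiperiodic (this is precisely the content of \cite[Theorem 4.2]{angulo-linares} transcribed to the $dn$ case), so all three appear in the $[0,4K]$ spectrum together with the constant-sign ground state. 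Ordering them via \eqref{delta}-type monotonicity (larger $\rho$ $\Leftrightarrow$ larger $\lambda$), and using that $2+2k^2-2\sqrt{1-k^2+k^4}<1+k^2<2+2k^2+2\sqrt{1-k^2+k^4}$ together with the ground state lying strictly below all of these, one gets that $\mathcal{L}_{1,dn}$ on $[0,2L]$ has exactly three negative simple eigenvalues $\lambda_0<\lambda_1<\lambda_2<0$, then $\lambda_3=0$ simple with eigenfunction $\phi'$, matching the asserted count; simplicity follows because each listed Lam\'e eigenvalue is simple in its own (semi)periodic class and the classes are disjoint.

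For the orthogonality $\xi_i\perp\phi$, $i=1,2$, I would argue by parity/class-separation rather than direct integration. The eigenfunction $\phi=\eta\,dn(\eta x;k)$ is $L$-periodic, i.e. it sits in the $2K$-periodic class on the doubled interval. The second and third eigenfunctions $\xi_1,\xi_2$ of $\mathcal{L}_{1,dn}$ on $[0,2L]$ are, after the change of variables, the Lam\'e eigenfunctions $1-(1+k^2\mp\sqrt{1-k^2+k^4})sn^2$ (whichever two have indices $2,3$): one of them lies in the $2K$-semiperiodic class, so it is automatically $L^2([0,2L])$-orthogonal to the $L$-periodic function $\phi$; the other lies in the $2K$-periodic class, and here orthogonality follows because $\phi$ spans the kernel of $\mathcal{L}_{2,dn}$ while these $\xi$'s are eigenfunctions of $\mathcal{L}_{1,dn}$ — more simply, one checks directly from the explicit quadratic-in-$sn^2$ forms that $\int_0^{4K} dn(x)\,[1-(1+k^2\mp\sqrt{1-k^2+k^4})sn^2(x)]\,dx$ is a fixed linear combination of $\int dn$, $\int dn\,sn^2$, which the reduction relations \eqref{vel} and the roots' relations force to vanish. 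The main obstacle is the bookkeeping of which of the two finite Lam\'e eigenvalues $2+2k^2\pm 2\sqrt{1-k^2+k^4}$ is periodic versus semiperiodic on $[0,2K]$, since that determines the precise ordering $\lambda_1<\lambda_2$ and the orthogonality mechanism for each $\xi_i$; I would settle this by directly testing the boundary conditions $\Psi(0)=\pm\Psi(2K)$, $\Psi'(0)=\pm\Psi'(2K)$ on the explicit eigenfunctions, using $sn(2K)=0$, $cn(2K)=-1$, $dn(2K)=1$. The rest is routine and parallels Lemma \ref{lemmaAng1} verbatim, so I would state it follows \cite[Theorem 4.2]{angulo-linares} and omit the computational details.
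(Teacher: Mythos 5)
Your high-level strategy is the same as the paper's (the paper's proof is essentially a citation): rescale so that the $2L$-periodic problem for $\mathcal{L}_{1,dn}$ becomes the Lam\'e problem $\Psi''+[\rho-6k^2sn^2(x;k)]\Psi=0$ on $[0,4K]$, note that its spectrum is the union of the $2K$-periodic and $2K$-semiperiodic spectra of \eqref{lameop}, and read off Ince's explicit Lam\'e polynomials. However, the execution contains genuine errors that break the argument. First, with $\Psi(x)=f(x/\eta)$ and $2c=\eta^2(2-k^2)$ (from \eqref{vel}) the spectral map is $\rho=\eta^{-2}(\lambda-2c)+6=\eta^{-2}\lambda+4+k^2$, not $\eta^{-2}(\lambda-2c)+4+2k^2$; with your constant, $\lambda=0$ would land at $\rho=2+3k^2$, which is not a Lam\'e eigenvalue, contradicting $\mathcal{L}_{1,dn}\phi'=0$. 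Second, the assignments are wrong: $sn\,cn$ has Lam\'e eigenvalue $4+k^2$ (and indeed $\lambda=0\leftrightarrow\rho=4+k^2$, eigenfunction $sn\,cn\propto\phi'$), while $\rho=1+k^2$ belongs to $cn\,dn$. The two eigenfunctions you leave out, $cn\,dn$ and $sn\,dn$ (eigenvalues $1+k^2<1+4k^2$, both $2K$-antiperiodic), are precisely the second and third eigenfunctions $\xi_1,\xi_2$ on $[0,2L]$; the value $2(1+k^2)+2\sqrt{1-k^2+k^4}$ that you place among the first four is in fact the fifth eigenvalue (it is $2K$-periodic and exceeds $4+k^2$). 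Note that with your own list and ordering, $\lambda=0$ would come out as the second eigenvalue, not the fourth, so your enumeration cannot yield the statement; the correct chain is $2(1+k^2)-2\sqrt{1-k^2+k^4}<1+k^2<1+4k^2<4+k^2<2(1+k^2)+2\sqrt{1-k^2+k^4}$, which gives exactly three simple eigenvalues below $\lambda_3=0$.

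The orthogonality step is likewise aimed at the wrong functions. For the true $\xi_1,\xi_2$ it is immediate: $cn\,dn$ and $sn\,dn$ are $2K$-antiperiodic, i.e.\ $\xi_i(\cdot+L)=-\xi_i$, while $\phi\propto dn(\eta\,\cdot)$ is $L$-periodic, so $\int_0^{2L}\xi_i\phi\,dx=0$ (equivalently $\int_0^{4K}cn\,dn^2\,dx=\int_0^{4K}sn\,dn^2\,dx=0$); this is your ``class separation'' argument, and it applies to both of them. By contrast, your claim that $\int_0^{4K}dn(x)\bigl[1-(1+k^2\mp\sqrt{1-k^2+k^4})sn^2(x)\bigr]dx$ vanishes is false: since $\int_0^{4K}dn\,dx=2\pi$ and $\int_0^{4K}dn\,sn^2\,dx=\pi$, it equals $\pi\bigl(1-k^2\pm\sqrt{1-k^2+k^4}\bigr)\neq0$. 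Indeed the ground state must not be orthogonal to $\phi$ --- that nonvanishing inner product is exactly what the instability argument uses (compare $\langle\chi_0,\phi\rangle>0$ in the proof of Theorem \ref{theoreminst}). So the proof goes through, in the spirit of Lemma \ref{lemmaAng1} and \cite{In}, \cite{Pastor}, only after correcting the dictionary: first four eigenvalues $\leftrightarrow$ Lam\'e values $2(1+k^2)-2\sqrt{1-k^2+k^4}$, $1+k^2$, $1+4k^2$, $4+k^2$, with eigenfunctions $1-(1+k^2-\sqrt{1-k^2+k^4})sn^2$, $cn\,dn$, $sn\,dn$, $sn\,cn$, the last one giving $\phi'$ and $\lambda=0$.
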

\begin{proof}
The proof follows combining the periodic and semi-periodic
eigenvalues problems associated to  operator $\mathcal{L}_{1,dn}$
with the equivalent  problem associated to the Lam\'e operator in
\eqref{lameop} (see e.g., \cite{Pastor}).
\end{proof}

\subsection{Stability results.}
 The procedure here follows the same steps as in Subsection \ref{subsection2.3}.
Our stability theorem reads as follows:

\begin{theorem}\label{teoestKGSCH1}
 Let $\phi_{c}$ be the dnoidal wave solution given by Proposition
\ref{propcurve}. Then the periodic wave solution
$(e^{ict}\phi_c,\phi_c)$ is orbitally stable in $X$ by the periodic
flow of system \eqref{KG-SCH2} in the sense of Definition
\ref{orbitalstability}.
\end{theorem}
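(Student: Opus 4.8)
The plan is to apply the abstract \emph{Stability Theorem} of Grillakis, Shatah and Strauss \cite{grillakis2}, following the scheme used to prove Theorem~\ref{teoestKGSCH}. Writing $\mathcal{H}=\mathcal{E}_1+c\mathcal{F}$, with $\mathcal{E}_1$ as in \eqref{conservada1} and $\mathcal{F}$ as in \eqref{functF}, and setting $\Phi_c=(\phi_c,\phi_c,0,0)$, there are three things to establish: that $\Phi_c$ is a critical point of $\mathcal{H}$; that the operator $\mathcal{L}_{dn}=\mathcal{H}''(\Phi_c)$ in \eqref{operatorprincdn} has the spectral structure demanded in \cite{grillakis2}; and that the scalar function $d(c):=\mathcal{H}(\Phi_c)$ is strictly convex.

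First I would verify the critical point condition by a direct computation: expanding $\mathcal{E}_1'(\Phi_c)+c\mathcal{F}'(\Phi_c)$ shows that $\mathcal{H}'(\Phi_c)=0$ is equivalent to the identities $-\phi_c''+2c\phi_c-2\phi_c^3=0$ and $-\phi_c''+m^2\phi_c-2\phi_c^3=0$, both of which hold since $m^2=2c$ and $\phi_c$, by Proposition~\ref{propcurve}, solves \eqref{quadra2} and hence its associated second-order stationary equation. For the spectral hypotheses, I would simply invoke Theorem~\ref{teoeigenRedn}: part~(i) gives that $\mathcal{L}_{\mathcal{R},dn}$ has exactly one negative eigenvalue, simple, and a simple eigenvalue at $0$, while part~(ii) gives $\mathcal{L}_{\mathcal{I},dn}\ge 0$ with $0$ simple; in view of the block form \eqref{operatorprincdn}, this forces $\mathcal{L}_{dn}$ to have exactly one (simple) negative eigenvalue, a two-dimensional kernel generated by the translation direction $(\phi_c',\phi_c',0,0)$ and the phase direction $(0,0,\phi_c,0)$, and a remaining spectrum that is a discrete set of positive eigenvalues --- precisely the picture required in \cite{grillakis2}.

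The last step, convexity of $d$, is where the real work lies. Differentiating $d$ and using $\mathcal{H}'(\Phi_c)=0$ gives $d'(c)=\mathcal{F}(\Phi_c)=\int_0^L\phi_c^2\,dx$, so $d''(c)=\frac{d}{dc}\int_0^L\phi_c^2\,dx$. Substituting $\phi_c(x)=\eta\,dn(\eta x;k)$ with $\eta=2K(k)/L$ and using $\int_0^{2K}dn^2(u;k)\,du=2E(k)$ yields $\int_0^L\phi_c^2\,dx=2\eta E(k)=4K(k)E(k)/L$. Since $dk/dc>0$ by Proposition~\ref{propcurve}, it then suffices to show that $k\mapsto K(k)E(k)$ is strictly increasing on $(0,1)$. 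Using the standard derivative formulas for the complete elliptic integrals, I would compute $\frac{d}{dk}\bigl(K(k)E(k)\bigr)=\bigl(E(k)^2-(1-k^2)K(k)^2\bigr)/\bigl(k(1-k^2)\bigr)$, which is positive because $E(k)>\sqrt{1-k^2}\,K(k)$ for every $k\in(0,1)$; this gives $d''(c)>0$, and the Stability Theorem then yields orbital stability of $(e^{ict}\phi_c,\phi_c)$ in $X$.

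The main obstacle is precisely this convexity: the other two ingredients are either immediate from Theorem~\ref{teoeigenRedn} or a transcription of the cnoidal argument. The delicate point is that $K(k)E(k)$ is a product of an increasing factor $K$ and a decreasing factor $E$, so one cannot read off the sign of its derivative directly and must instead rely on the elliptic-integral inequality $E(k)^2>(1-k^2)K(k)^2$ on $(0,1)$. Once that inequality is in hand, the strict convexity of $d$, and hence the orbital stability, follow.
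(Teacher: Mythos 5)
Your proposal is correct and follows essentially the same route as the paper: criticality of $(\phi_c,\phi_c,0,0)$ for $\mathcal{E}_1+c\mathcal{F}$, the spectral picture from Theorem \ref{teoeigenRedn}, and convexity of $d$ via $\int_0^L\phi_c^2\,dx=4K(k)E(k)/L$ together with $dk/dc>0$ and the monotonicity of $k\mapsto K(k)E(k)$. The only difference is that you supply the derivative formula $\frac{d}{dk}\bigl(K(k)E(k)\bigr)=\bigl(E^2-(1-k^2)K^2\bigr)/\bigl(k(1-k^2)\bigr)$ and the inequality $E(k)>\sqrt{1-k^2}\,K(k)$, a detail the paper simply asserts.
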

\begin{proof}
We apply the abstract {\it Stability Theorem} in \cite{grillakis2}
again. It follows from Theorem \ref{teoeigenRedn} that operator
$\mathcal{L}_{dn}$ has a unique negative eigenvalue, its kernel is
two-dimensional and the remainder of the spectrum is bounded away
from zero. Moreover, by our construction
\begin{equation}\label{crit1}
\mathcal{E}_1'(\phi_c,\phi_c,0,0)+c\mathcal{F}'(\phi_c,\phi_c,0,0)=0.
\end{equation}
Hence, it remains only to show that $d''(c)>0$ where
$$
d(c)=\mathcal{E}(\phi_{c},
\phi_{c},0,0)+c\mathcal{F}(\phi_{c},\phi_{c},0,0).
$$
From \eqref{crit1} we have
$d'(c)=\mathcal{F}(\phi_{c},\phi_{c},0,0)$. Therefore,
\begin{equation}\begin{array}{lll}
d''(c)&=&\displaystyle\frac{d}{dc}\left(\int_{0}^{L}\phi_{c}^2(x)dx\right)\\\\
&=&\displaystyle\frac{4}{L}\frac{d}{dc}\left(K(k)\int_{0}^{K}dn^2(x)dx\right)
=\frac{4}{L}\frac{d}{dk}(K(k)E(k))\frac{dk}{dc}.
\end{array}\label{productofdc1}\end{equation}
Since $k\in (0,1)\mapsto K(k)E(k)$ is a strictly increasing function
and $\displaystyle dk/dc>0$ (see Proposition \ref{propcurve}) we see
from \eqref{productofdc1} that $d''(c)>0$. This completes the proof
of the theorem.
\end{proof}

\begin{remark}
As mentioned in Remark $\ref{obs1}$,  by similar arguments presented
in this section, one can obtain the stability of the solitary
standing wave solution of the form $(e^{ict}\phi_c(x),\phi_c(x))$,
$x\in\mathbb{R}$, $c>0$, for equation $(\ref{KGSCH})$ with
$f(s,t)=st$, where $\phi_c$ is given by $(\ref{sechsol})$.

\end{remark}

\subsection{Instability results}

The idea here is to prove a similar result as the one in Subsection
\ref{subsection2.4}, in which we prove an orbital instability result
taking the advantage that the linearized system has the zero
solution as an unstable solution.

We first observe that linearizing system \eqref{KG-SCH2} around the
orbit, $$\Gamma=\{ P_{ct}(\phi_c,\phi_c,0,0); \,\, t \in
\mathbb{R}\},$$ where $P_s$ is defined in \eqref{phase}, we face the
equation
\begin{equation}\label{222}
\frac{dW}{dt}=J\mathcal{L}_{dn}W+O(\|W\|^2),
\end{equation}
where $W(t)=P_{(-ct)}U(t)-(\phi_c,\phi_c,0,0)$, $J$ and
$\mathcal{L}_{dn}$ are defined in \eqref{matrixJ} and
\eqref{operatorprincdn}, respectively.

Thus we can prove:

\begin{theorem}  \label{theoreminst1}
Let  $\phi_c$ be the dnoidal wave given by Proposition
$\ref{propcurve}$. Then the orbit
$$
\Gamma=\{ P_{cs}(\phi_c,\phi_c,0,0), \,\, s \in \mathbb{R} \}
$$
is orbitally unstable in  $Y$ in the sense of Definition
\ref{definition6.1}.
\end{theorem}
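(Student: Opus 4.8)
The plan is to mirror the argument used for Theorem \ref{theoreminst} in the cnoidal case, adapting it to the dnoidal operators. First I would recall that, by Lemma 5.6 and Theorem 5.8 in \cite{grillakis2}, the operator $J\mathcal{L}_{dn}$ (with $\mathcal{L}_{dn}$ now posed on $[0,2L]$) has only finitely many eigenvalues with strictly positive real part, so orbital instability of $\Gamma$ in $Y$ will follow — via the linear-instability-implies-orbital-instability criterion of Grillakis, \cite{grillakis}, \cite{grillakis3} — as soon as we exhibit at least one eigenvalue of $J\mathcal{L}_{dn}$ with strictly positive real part. As before, the counting formula (Theorem 2.6 in \cite{grillakis3}) gives that the number of such $\pm$-pairs equals
$$
\max\{ n(\widehat{\mathcal{L}}_{\mathcal{R},dn}),\, n(\widehat{\mathcal{L}}_{\mathcal{I},dn}^{-1}) \} - d\bigl(C(\widehat{\mathcal{L}}_{\mathcal{R},dn}) \cap C(\widehat{\mathcal{L}}_{\mathcal{I},dn}^{-1})\bigr),
$$
where $\mathcal{Z}=[Ker(\mathcal{L}_{\mathcal{R},dn})\cup Ker(\mathcal{L}_{\mathcal{I},dn})]^{\bot}$ and the restricted operators and negative cones are defined exactly as in the proof of Theorem \ref{theoreminst}.

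Next I would dispose of the imaginary part: since zero is the first eigenvalue of $\mathcal{L}_{\mathcal{I},dn}$ (Theorem \ref{teoeigenRedn}(ii)), the restriction $\widehat{\mathcal{L}}_{\mathcal{I},dn}$ is positive on $\mathcal{Z}$, hence $C(\widehat{\mathcal{L}}_{\mathcal{I},dn}^{-1})=\emptyset$ and $n(\widehat{\mathcal{L}}_{\mathcal{I},dn}^{-1})=0$. Therefore the counting number collapses to $n(\widehat{\mathcal{L}}_{\mathcal{R},dn})$, and it suffices to prove $n(\widehat{\mathcal{L}}_{\mathcal{R},dn})=2$. To get this, I would use the diagonalization $\mathcal{L}_{D\mathcal{R}}=B_{\mathcal{R}}\mathcal{L}_{\mathcal{R},dn}B_{\mathcal{R}}^{-1}=\mathrm{diag}(\mathcal{L}_{1,dn},\mathcal{L}_{3,dn})$ from \eqref{matrixopdn}, noting $\sigma(\mathcal{L}_{3,dn})\geq 2c>0$, so the negative spectrum of $\mathcal{L}_{\mathcal{R},dn}$ on $[0,2L]$ comes entirely from $\mathcal{L}_{1,dn}$; by Lemma \ref{lemma2L} this operator has exactly three negative eigenvalues $\lambda_0<\lambda_1<\lambda_2<0$ with eigenfunctions $\chi_0,\chi_1,\chi_2$, where $\chi_0>0$ (ground state) and $\xi_i:=\chi_i\perp\phi$ for $i=1,2$. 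Pulling back through $B_{\mathcal{R}}^{-1}$, the vectors $\overrightarrow{\chi}_j$ (the appropriate constant multiples of $(\chi_j,\chi_j)$ coming from $B_{\mathcal{R}}^{-1}$) are eigenfunctions of $\mathcal{L}_{\mathcal{R},dn}$ with the same negative eigenvalues, so $n(\mathcal{L}_{\mathcal{R},dn})=3$ on $[0,2L]$, whence $n(\widehat{\mathcal{L}}_{\mathcal{R},dn})\leq 3$.

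Finally I would compute the action of $\mathcal{L}_{\mathcal{R},dn}$ restricted to $\mathcal{Z}$ on the two vectors $\overrightarrow{\chi}_1,\overrightarrow{\chi}_2$: since $Ker(\mathcal{L}_{\mathcal{I},dn})$ is spanned by $(\phi,0)$ and $\xi_i\perp\phi$, these two eigenvectors lie in $\mathcal{Z}$ and span a two-dimensional subspace on which $\mathcal{L}_{\mathcal{R},dn}$ is negative definite, giving $n(\widehat{\mathcal{L}}_{\mathcal{R},dn})\geq 2$; on the other hand $\langle \overrightarrow{\chi}_0,(\phi,0)\rangle$ is a positive multiple of $\langle\chi_0,\phi\rangle>0$ (both strictly positive functions), so the ground-state direction is killed by the projection onto $\mathcal{Z}$ and does not contribute, forcing $n(\widehat{\mathcal{L}}_{\mathcal{R},dn})=2$. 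Hence the number in the counting formula equals $2>0$, so $J\mathcal{L}_{dn}$ has (at least) one positive real eigenvalue, the zero solution of \eqref{222} is linearly unstable, and $\Gamma$ is orbitally unstable in $Y$. The only genuinely technical point — and the step I expect to require the most care — is Lemma \ref{lemma2L}: verifying, via the correspondence with the Lam\'e operator $\mathcal{L}_{Lame}=-d^2/dx^2+6k^2sn^2(x;k)$ on $[0,4K]$ and the known Lam\'e polynomials, that $\mathcal{L}_{1,dn}$ on $[0,2L]$ has exactly three negative eigenvalues and that the second and third eigenfunctions are orthogonal to $\phi$; but since this is quoted from Lemma \ref{lemma2L} (and \cite{Pastor}), the present proof may invoke it directly.
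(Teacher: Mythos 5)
Your proposal is correct and follows essentially the same route as the paper: the authors' own proof of Theorem \ref{theoreminst1} simply invokes the argument of Theorem \ref{theoreminst} ``with obvious modifications,'' replacing Theorem \ref{teoeigenRe} and Lemma \ref{lemmaAng1} by Theorem \ref{teoeigenRedn} and Lemma \ref{lemma2L}, which is exactly what you carried out (Grillakis counting formula, positivity of $\mathcal{L}_{\mathcal{I},dn}$ on $\mathcal{Z}$, three negative eigenvalues of $\mathcal{L}_{1,dn}$ on $[0,2L]$, and the orthogonality/positivity relations with $\phi$). The details you supply, including the pull-back of eigenfunctions through $B_{\mathcal{R}}^{-1}$ to vectors proportional to $(\chi_j,\chi_j)$, match the intended adaptation.
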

\begin{proof}
The proof follows the same analytic-functional approach introduced
in Theorem \ref{theoreminst} (with obvious modifications), taking
into account Theorem \ref{teoeigenRedn} and Lemma \ref{lemma2L}
instead of Theorem \ref{teoeigenRe} and Lemma \ref{lemmaAng1}. So,
we will omit the details.
\end{proof}

\textbf{Acknowledgement:} The authors would like to thank J. Angulo
for the interest and comments with regard to this work and the
anonymous referee for giving constructive suggestions which allow to
improve the present manuscript.


\begin{thebibliography}{99}


\bibitem{angulo1} J. Angulo,
\textit{Non-linear stability of periodic travelling-wave equation
for the Schr\"odinger and modified Korteweg-de Vries equation}, J.
Diff. Equations, \textbf{235} (2007), 1-30.

\bibitem{angulo2} J. Angulo, J.L. Bona and M. Scialom, \textit{
Stability of cnoidal waves}, Adv. Diff. Equations, \textbf{11}
(2006), 1321-1374.

\bibitem{angulo3} J. Angulo,
\textit{Stability of cnoidal waves to Hirota-Satsuma systems}, Mat.
Contemp., \textbf{27} (2004), 189-223.

\bibitem{natali} J. Angulo and F. Natali,
\textit{Positivity properties of the Fourier transform and stability
of periodic travelling waves solutions}, SIAM J. Math. Anal.,
\textbf{40} (2008), 1123--1151.

\bibitem{angulo-linares} J. Angulo and F. Linares,
\textit{Periodic pulses of coupled nonlinear Schr\"odinger equations
in optics}, Indiana Univ. Math. J., \textbf{56} (2007), 847-877.


\bibitem{AN2}  J. Angulo and F. Natali, \textit{Stability and instability of periodic
travelling waves solutions for the critical Korteweg-de Vries and
non-linear Schr\"odinger equations}, Physica D, \textbf{238} (2009),
603-621.

\bibitem{Pastor} J. Angulo and A. Pastor, \textit{Stability of periodic optical
solitons for a nonlinear Schr\"odinger system},  to appear in  Proc.
Roy. Soc. Edinburgh Sect. A, (2009).

\bibitem{bachelot} A. Bachelot, \textit{Probl\`eme de Cauchy pour des syst\`emes
hyperboliques semi-lin\'eaires}, Ann. Inst. H. Poincaré Anal. Non
Lin\`eaire, \textbf{1} (1984), 453-478.

\bibitem{baillon} J.B. Baillon and J.M. Chadam, \textit{The Cauchy problem for the
coupled Schr\"odinger-Klein-Gordon equations}, Contemporary
Developments in Continuum Mechanics and Partial Differential
Equations (Proc. Internat. Sympos., Inst. Mat., Univ. Fed. Rio de
Janeiro, Rio de Janeiro), (1978), 37-44.

\bibitem{bou1} J. Bourgain,"Global Solutions of Nonlinear Schr\"odinger
Equations", Amer. Math. Soc. Colloq. Publ., \textbf{46} (1999).

\bibitem{byrd} P.F. Byrd and M.D. Friedman, "Handbook of Elliptic Integrals
for Engineers and Scientists", 2nd ed., Springer, NY, (1971).

\bibitem{cazenave1} T. Cazenave and P.L. Lions, \textit{Orbital
stability of standing waves for some Schr\"odinger equations}, Comm.
Math. Phys., \textbf{85} (1982), 549-561.

\bibitem{colliander} J. Colliander, J. Holmer and N. Tzirakis,
\textit{Low regularity global well-posedness for the Zakharov and
Klein--Gordon--Schr\"odinger systems}, Trans. Amer. Math. Soc.,
\textbf{360} (2008), 4619-4638.


\bibitem{efinger} H.J. Efinger, \textit{On the stability of solitary-wave solutions of
Yukawa-coupled Klein--Gordon--Schr\"odinger equations}, Lett. Nuovo
Cimento, \textbf{35} (1982), 186-188.

\bibitem{fukuda-tsutsumi} I. Fukuda and M. Tsutsumi,
\textit{On coupled Klein--Gordon--Schr\"odinger equations II}, J.
Math. Anal. Appl., \textbf{66} (1978), 358-378.

\bibitem{Gallay1} T. Gallay and M. H\u{a}r\u{a}gu\c s,
\textit{Stability of small periodic waves for nonlinear
Schr\"{o}dinger equation},  J. Differential Equations, \textbf{234}
(2007), 544-581.

\bibitem{grillakis1} M. Grillakis, J. Shatah and W. Strauss,
\textit{Stability theory of solitary waves in the presence of
symmetry I}, J. Funct. Anal., \textbf{74} (1987), 160-197.

\bibitem{grillakis2} M. Grillakis, J. Shatah and W. Strauss,
\textit{Stability theory of solitary waves in the presence of
symmetry II}, J. Funct. Anal., \textbf{74} (1990), 308-348.


\bibitem{grillakis} M. Grillakis, \textit{Linearized instability for nonlinear
Schr\"odinger and Klein--Gordon equations}, Comm. Pure Appl. Math.,
\textbf{61} (1988), 747-774.


\bibitem{grillakis3} M. Grillakis,  \textit{Analysis of the
linearization around a critical point of an infinite-dimensional
Hamiltonian system},  Comm. Pure Appl. Math., \textbf{43} (1990),
299-333.

\bibitem{hayashi1} N. Hayashi and W. von Wahl, \textit{On the global strong solution of
coupled Klein-Gordon-Schr\"odinger equations}, J. Math. Soc. Japan,
\textbf{39} (1987), 489-497.

\bibitem{hayashi} N. Hayashi,  \textit{Global strong solutions of
coupled Klein-Gordon-Schr\"odinger equations},  Funkcial. Ekvac.,
\textbf{29} (1986), 299-307.


\bibitem{In} E.L. Ince, \textit{The periodic Lam\'e function},
Proc. Roy. Soc. Edinburgh, \textbf{60} (1940), 47-63.

\bibitem{iorio1} R.J.Jr. Iorio and V.M.V Iorio, "Fourier Analysis and
Partial Differential Equations", \textbf{70} Cambridge Stud. Advan.
Math., (2001).

\bibitem{kikuchi-ohta} H. Kikuchi and M. Ohta, \textit{Instability of standing waves for the
Klein-Gordon-Schr\"odinger system}, Hokkaido Math. J., Special
Issue, Nonlinear Wave Equations, \textbf{37} (2008), 735-748.


\bibitem{MW} W. Magnus and  S. Winkler,  "Hill's Equation",
  Tracts in Pure and Appl. Math., 20, Wesley. New York, (1976).

\bibitem{natali-pastor} F. Natali and A. Pastor, \textit{Stability and instability of periodic standing wave solutions for
some Klein-Gordon equations}, J. Math. Anal. Appl., \textbf{347}
(2008), 428-441.

\bibitem{ohta1} M. Ohta, \textit{Stability of stationary states for the coupled
Klein-Gordon-Schr \"odinger equations},  Nonlinear Anal.,
\textbf{27} (1996), 455-461.

\bibitem{pastor1} A. Pastor, \textit{Nonlinear and spectral stability of
periodic travelling wave solutions for a nonlinear Schr\"odinger
system}, to appear in Differential Integral Equations.

\bibitem{rabsztyn} S. Rabsztyn, \textit{On the Cauchy problem for the coupled
Schr\"odinger--Klein--Gordon equations in one space dimensional}, J.
Math. Phys., \textbf{25} 1984, 1262-1265.

\bibitem{shatah1} J. Shatah, \textit{Unstable ground state of nonlinear
Klein--Gordon equations}, Trans. Amer. Math. Soc., \textbf{290}
(1985), 701-710.

\bibitem{shatah2} J. Shatah, \textit{Stable standing waves
of non-linear Klein--Gordon equations}, Comm. Math. Phys.,
\textbf{91} (1983), 313-327.

\bibitem{shatahstrauss} J. Shatah and W. Strauss, \textit{Instability
of nonlinear bound states}, Comm. Math. Phys., \textbf{100} (1985),
173-190.

\bibitem{tang} X-Y. Tang and W. Ding, \textit{The general Klein-–Gordon-–Schr\"odinger
system: modulational instability and exact solutions}, Phys. Scr.,
\textbf{77} (2008), 1-8.

\bibitem{tang1} X-Y. Tang and P.K. Shukla, \textit{Modulational instability and exact
solutions of the nonlinear Schr\"odinger equation coupled with the
nonlinear Klein-–Gordon equation}, J. Phys. A: Math. Theor.,
\textbf{40} (2007), 3729–3740.

\bibitem{tzirakis} N. Tzirakis, \textit{The cauchy problem for the
Klein–-Gordon–-Schr\"odinger system in low dimensions below the
energy space}, Comm. Partial Differential Equations, \textbf{30}
(2005), 605-641.

\bibitem{yu} M.Y. Yu and P.K. Shukla, \textit{On the formation of upper-hybrid
solitons}, Plasma Phys. \textbf{19} (1977), 889-893.



\end{thebibliography}
\end{document}